\let\I\@undefined
\DeclareSymbolFont{operators}{OT1}{txr}{m}{n}
\def\operator@font{\mathgroup\symoperators}
\DeclareSymbolFont{italic}{OT1}{txr}{m}{it}
\DeclareSymbolFontAlphabet{\mathrm}{operators}
\DeclareMathAlphabet{\mathbf}{OT1}{txr}{bx}{n}
\DeclareMathAlphabet{\mathit}{OT1}{txr}{m}{it}
\SetMathAlphabet{\mathit}{bold}{OT1}{txr}{bx}{it}
\DeclareSymbolFont{letters}{OML}{txmi}{m}{it}
\DeclareSymbolFont{lettersA}{U}{txmia}{m}{it}
\DeclareSymbolFontAlphabet{\mathfrak}{lettersA}
\DeclareSymbolFont{symbols}{OMS}{txsy}{m}{n}
\DeclareMathOperator{\Span}{span}
\DeclareMathOperator{\D}{d}
\DeclareMathOperator{\I}{Im}
\DeclareMathOperator{\R}{Re}
\theoremstyle{plain}
\newtheorem{theorem}{Theorem}[section]
\newtheorem{lemma}[theorem]{Lemma}
\newbox\shell
\newcommand{\dia}[2]{\setbox\shell=\hbox{\begin{picture}(180,120)(-90,-60)#1
\put(-90,-60){\makebox(180,120)[b]{\large #2}}\end{picture}}\dimen0=\ht
\shell\multiply\dimen0by7\divide\dimen0by16\raise-\dimen0\box\shell\hfill}
\newcommand{\vtx}{\circle*{10}}
\theoremstyle{definition}
\numberwithin{equation}{section}
\begin{document}

\pagenumbering{roman}
\selectlanguage{english}
%+Title
\title{On Borwein's  conjectures for planar uniform random walks}
\author{Yajun Zhou}
\address{Program in Applied and Computational Mathematics (PACM), Princeton University, Princeton, NJ 08544; Academy of Advanced Interdisciplinary Studies (AAIS), Peking University, Beijing 100871, P. R. China} \email{yajunz@math.princeton.edu, yajun.zhou.1982@pku.edu.cn}
\thanks{\textit{Keywords}:    Bessel functions,  random walks, Wick rotations, Taylor expansions \\\indent\textit{MSC 2010}:  60G50, 33C10 (Primary) 81T18 (Secondary) \\\indent * This research was supported in  part  by the Applied Mathematics Program within the Department of Energy
(DOE) Office of Advanced Scientific Computing Research (ASCR) as part of the Collaboratory on
Mathematics for Mesoscopic Modeling of Materials (CM4)}
\date{\today}

\maketitle
%-Title

%+Abstract
\begin{center}\footnotesize{\textit{In memoriam Jonathan M. Borwein }(1951--2016)}\end{center}\vspace{-1em}

\begin{abstract}
   Let $ p_n(x)=\int_0^\infty J_0(xt)[J_0(t)]^n xt\D t$ be Kluyver's probability density for $n$-step uniform random walks in the Euclidean plane. Through connection to a similar problem in 2-dimensional quantum field theory, we evaluate the third-order derivative $ p_5'''(0^{+})$ in closed form, thereby giving a new proof for a conjecture of J. M. Borwein. By further analogies to Feynman diagrams in quantum field theory, we demonstrate that  $ p_n(x),0\leq x\leq 1$  admits a uniformly convergent Maclaurin expansion  for all odd integers $ n\geq5$, thus settling another conjecture of Borwein.  \end{abstract}
%-Abstract
%\pagenumbering{arabic}
%+Contents
%\pagenumbering{roman}
%\tableofcontents

%-Contents
%\clearpage

\pagenumbering{arabic}

\section{Introduction}
Following Pearson \cite{Pearson1905a,Pearson1905b} and Rayleigh \cite{Rayleigh1905}, we consider a rambler walking in the Euclidean plane, taking $n$ consecutive steps of unit lengths, aiming at uniformly distributed random directions (Fig.~\ref{fig:p5}\textit{a}). For $n\in\mathbb Z_{>1}$, the distance $x$ traveled by such a random walker is characterized by Kluyver's probability density function \cite{Kluyver1906}:\begin{align}
p_{n}(x)=\int_0^\infty J_0(xt)[J_0(t)]^n xt\D t,\label{eq:Kluyver_pn}
\end{align} where  $ J_0(t):=\frac{2}{\pi}\int_0^{\pi/2}\cos(t\cos\varphi)\D\varphi$ is the Bessel function of the first kind and\ zeroth order. According to the statistical interpretation, the function  $p_n(x)$ is supported on $[0,n]$. The result \begin{align}
p_2(x)=\frac{2}{\pi\sqrt{4-x^{2}}},\quad 0\leq x<2
\end{align}is classical. The analytic and arithmetic properties of the 3-step density   $ p_3(x)$ and the 4-step density $p_4(x)$ for \textit{planar uniform random walks} have been thoroughly explored by Borwein and coworkers \cite{BNSW2011,BSWZ2012,BSW2013}.

In \cite[\S5]{BSWZ2012}, Borwein \textit{et al.}~investigated the Maclaurin expansion $ p_5(x)=\sum^\infty_{k=0}r_{5,k}x^{2k+1}$ for small and positive $x$, arriving at a closed-form evaluation \cite[Theorem 5.1]{BSWZ2012} of the leading Taylor coefficient via special values of  Euler's gamma function:\footnote{In this work, we write $ f'(0^+) $ for the  one-sided limit of the derivative, namely,  $ \lim_{x\to0^+}f'(x)$. Since the functions we study are  right-continuous at the origin, \textit{i.e.}~$ f(0)=f(0^+):=\lim_{x\to0^+}f(x)$, the value of $f'(0^+) $ also agrees with the derivative from the right $ f'_+(0):=\lim_{x\to0^+}[f(x)-f(0)]/x$, according to Lagrange's mean value theorem.}\begin{align}
r_{5,0}^{\phantom'}=p'_5(0^{+})=\frac{\sqrt{5}}{40\pi^4}\Gamma\left( \frac{1}{15} \right)\Gamma\left( \frac{2}{15} \right)\Gamma\left( \frac{4}{15} \right)\Gamma\left( \frac{8}{15} \right)\label{eq:r50}
\end{align} and a conjectural relation \cite[cf.][(5.3)]{BSWZ2012}\begin{align}
r_{5,1}\overset{?}=\frac{13}{225}r_{5,0}-\frac{2}{5\pi^4r_{5,0}}.\label{eq:B_conj}
\end{align}During a recent study of random walks in 4-dimensional Euclidean space, Borwein \textit{et al.}~have proposed an equivalent form  \cite[(89)]{BSV2016} of the conjecture above:\begin{align}
8\int_{0}^\infty[J_1(t)]^5\frac{\D t}{t^{2}}=\frac{1}{6}r_{5,0}+\frac{105}{16\pi^4r_{5,0}},\label{eq:B_conj'}
\end{align}  where $ J_1(t)=-\D J_0(t)/\D t$.
The integral on the left-hand side of \eqref{eq:B_conj'} can be evaluated in closed form \cite[Example 4.15]{BSV2016}, so the original conjecture in \eqref{eq:B_conj} has been verified by a connection between 2-dimensional and 4-dimensional random walks \cite[Theorem 4.17]{BSV2016}.

Let \begin{align}
W_{n}(s):=\int_{0}^1\D  t_1\cdots\int_0^1\D t_n\left\vert \sum^n_{k=1}e^{2\pi i t_k}\right\vert^s
\end{align}be Pearson's $n$-step \textit{ramble integral} for complex-valued $s$.  For $n\in\mathbb Z_{>1} $ and $\R s>0$, the convergent ramble integral is related to Kluyver's probability density by a moment formula  $ W_n(s)=\int_0^\infty x^s p_n(x)\D x$ \cite[(2.3)]{BSWZ2012}. In \cite{BNSW2011,BSW2013}, Borwein \textit{et al.}~have studied analytic continuations of $W_n(s)$,  showing that the only possible singularities are poles at certain negative integers, and the order of each pole is at most $2$. \textit{En route} to giving a partial proof of a sum rule involving the ramble integrals (see \cite[Conjecture 1]{BNSW2011} and \cite[Conjecture 1.1]{BSW2013}):\begin{align}
W_{2j+2}(s)\overset{?}=\sum_{m=0}^\infty\left[\frac{\Gamma\left(\frac{s}{2}+1\right)}{\Gamma(m+1)\Gamma\left(\frac{s}{2}-m+1\right)}\right]^2W_{2j+1}(s-2m ),\label{eq:Wns_sum_rule}
\end{align} where $ j\in\mathbb Z_{>0},s\in\mathbb C\smallsetminus\mathbb Z$, Borwein--Straub--Wan conjectured that all the poles of  $ W_n(s)$ are \textit{simple} when $n$ is an odd number greater than $1$ \cite[Conjecture 4.1]{BSW2013}. While this ``simple pole conjecture'' has been  tested on individual odd integers up to $n=45$ \cite{BSW2013} using recurrence formulae for  ramble integrals, we have not been able to locate a proof for the general cases in previous literature.

In \S\ref{sec:new_pf_B_conj}, we give a new proof of Borwein's conjecture stated in \eqref{eq:B_conj}. Instead of going to 4-dimensional space, we use a \textit{Wick rotation} to establish a link between $ p_5(x)$ and a Bessel moment $ \int_0^\infty I_0(xt)I_{0}(t)[K_0(t)]^4 t\D t$ occurring in 2-dimensional quantum field theory, where  $ I_0(t)=\frac{1}{\pi}\int_0^\pi e^{t\cos\theta}\D\theta$ and $ K_0(t)=\int_0^\infty e^{-t\cosh u}\D u$  are modified Bessel functions of zeroth order. This link allows us to compute $ p_5'''(0^{+})$  in terms of the Bessel moment $ \int_0^\infty I_{0}(t)[K_0(t)]^4 t^{3}\D t$, whose value has been conjectured by Broadhurst in the famous paper of Bailey--Borwein--Broadhurst--Glasser \cite[(96)]{Bailey2008}, and confirmed in our recent work \cite[\S3]{Zhou2017WEF}.

In \S\ref{sec:Maclaurin}, we study the Maclaurin expansion of $p_5(x) $ in full, by further exploiting  the connection between  $ p_5(x)$ and  $ \int_0^\infty I_0(xt)I_{0}(t)[K_0(t)]^4 t\D t$. This results in a proof for the \textit{strict positivity} of all the Taylor coefficients ($r_{5,k}>0 $ for $ k\in\mathbb Z_{\geq0}$), and subsequently, uniform convergence of the Maclaurin series $\sum^\infty_{k=0}r_{5,k}x^{2k+1} $ on $ [-3,3]$. In addition, we shed new light on the \textit{Pearson--Fettis phenomenon}, namely, the approximate linearity of $p_5(x),0\leq x\leq 1 $ (see Fig.~\ref{fig:p5}\textit{b}).

In \S\ref{sec:6_8}, we  set  the leading asymptotic behavior for $p_5(x) $, $p_6(x)$ and $ p_8(x)$   in a unified framework, and analyze the Maclaurin expansion of $ p_7(x)$. For the first task, we show that $ p_5'(0^{+})$, $ p_6'(0^{+})$ and $ p_8'(0^{+})$ are representable through \textit{critical values} of certain modular $ L$-series, which are also  related to several Bessel moments in  2-dimensional quantum field theory. Towards this end, we revisit some important conjectures of Broadhurst \cite{Broadhurst2013MZV,Broadhurst2016} that have been recently proven  \cite[\S\S4--5]{Zhou2017WEF}. For the second task, we relate $ p_7(x)$ to  $ \int_0^\infty I_0(xt)I_{0}(t)[K_0(t)]^6 t\D t$ and    $ \int_0^\infty I_0(xt)[I_{0}(t)]^{3}[K_0(t)]^4 t\D t$, by analogy to the    Maclaurin expansion of $ p_5(x)$.

In \S\ref{sec:pn_Mac}, we further generalize our methods  to the analysis of  $ p_{2j+1}(x),0\leq x<1$, where $j$ is a positive integer. We show that all such probability densities are representable as  $ \mathbb Q$-linear combinations of Feynman diagrams $ \int_0^\infty I_0(xt) I_0(t)^{2m+1}[K_0(t)/\pi]^{2(j-m)}xt\D t$ where $ m\in\mathbb Z\cap[0,(j-1)/2 ]$. This establishes, \textit{a fortiori}, that   $ p_{2j+1}(x), 0\leq x< 1$  admits a  convergent Maclaurin expansion,  that the corresponding ramble integral $ W_{2j+1}(s)$ has only \textit{simple poles}, and that the sum rule in \eqref{eq:Wns_sum_rule} is true. \section{A new proof of Borwein's conjecture for 5-step  random walks\label{sec:new_pf_B_conj}}For $ x\in[0,3]$, the Bessel moment $ \int_0^\infty I_0(xt)I_{0}(t)[K_0(t)]^4 t\D t$  evaluates (up to a normalizing constant) the following \textit{Feynman diagram} in 2-dimensional quantum field theory:\begin{align}
\;\;\;\;\;
\dia{\put(-50,0){\line(-1,-1){50}}
\put(-50,0){\line(-1,1){50}}
\put(50,0){\line(1,1){50}}
\put(50,0){\line(1,-1){50}}
\put(-100,45){\begin{rotate}{-45}{$^1$}\end{rotate}}
\put(-80,-75){\begin{rotate}{45}{$^{x }$}\end{rotate}}
\put(0,0){\circle{100}}
\qbezier(-50, 0)(0, 35)(50, 0)
\qbezier(-50, 0)(0, -35)(50, 0)
\put(50,0){\vtx}
\put(-50,0){\vtx}
\put(90,35){\begin{rotate}{45}{$^{x}$}\end{rotate}}
\put(65,-60){\begin{rotate}{-45}{$^{1}$}\end{rotate}}
\put(-8,45){$^1$}
\put(-8,10){$^{1 }$}
\put(-8,-45){$^{1 }$}
\put(-8,-80){$^{1 }$}
}{}
\end{align}where all the four internal lines carry unit masses (corresponding to $ [K_0(t)]^4$), one pair of external legs carry unit mass (corresponding to $ I_0(t)$) and the other pair of external legs carry  $ x$  times the unit mass (corresponding to $ I_0(xt)$). If we further restrict the range of $x$ to the closed unit interval $[0,1]$, such a Feynman diagram is related to Kluyver's probability density function $p_5(x)$, as we demonstrate in the lemma below.

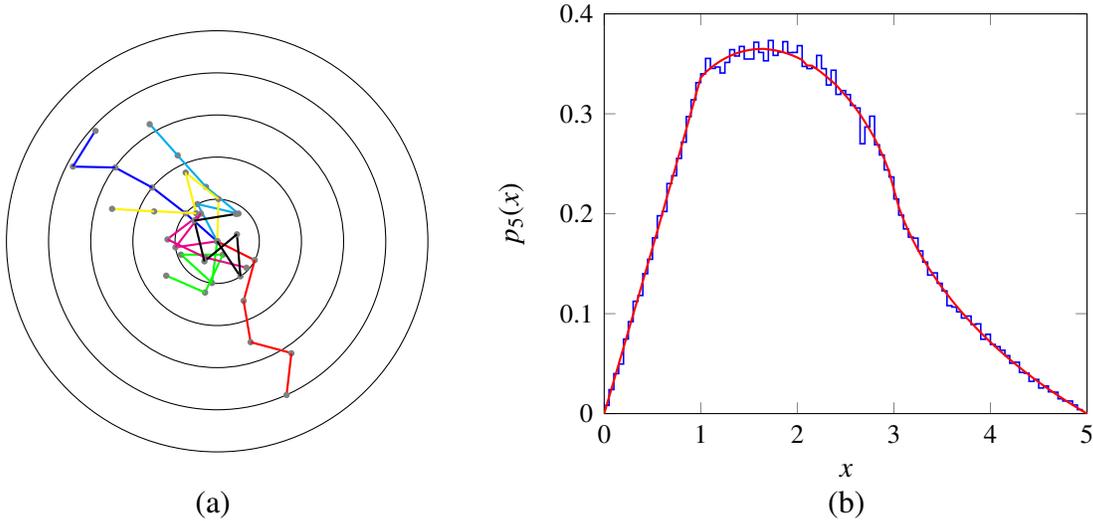
\begin{figure}[t]\begin{minipage}{0.38\textwidth}\begin{center}\begin{picture}(0,0)(0,0)
\put(0,0){\circle{400}}
\put(0,0){\circle{320}}
\put(0,0){\circle{240}}
\put(0,0){\circle{160}}
\put(0,0){\circle{80}}
\put(0,0){\color{gray}\circle*{6}}
\put(0,0){\thicklines\color{blue}{\line(-0.749744,0.661728){29.9898}}}
\put(-29.9898,26.4691){\color{gray}\circle*{6}}
\put(-29.9898,26.4691){\thicklines\color{blue}{\line(-0.790159,0.612902){31.6063}}}
\put(-61.5961,50.9852){\color{gray}\circle*{6}}
\put(-61.5961,50.9852){\thicklines\color{blue}{\line(-0.879822,0.475303){35.1929}}}
\put(-96.789,69.9974){\color{gray}\circle*{6}}
\put(-96.789,69.9974){\thicklines\color{blue}{\line(-0.999776,0.0211722){39.991}}}
\put(-136.78,70.8442){\color{gray}\circle*{6}}
\put(-136.78,70.8442){\thicklines\color{blue}{\line(0.530447,0.847718){21.2179}}}
\put(-115.562,104.753){\color{gray}\circle*{6}}
\put(0,0){\color{gray}\circle*{6}}
\put(0,0){\thicklines\color{green}{\line(-0.122889,-0.99242){4.91555}}}
\put(-4.91555,-39.6968){\color{gray}\circle*{6}}
\put(-4.91555,-39.6968){\thicklines\color{green}{\line(-0.738098,0.674693){29.5239}}}
\put(-34.4395,-12.7091){\color{gray}\circle*{6}}
\put(-34.4395,-12.7091){\thicklines\color{green}{\line(1.,0.00088566){40.}}}
\put(5.5605,-12.6737){\color{gray}\circle*{6}}
\put(5.5605,-12.6737){\thicklines\color{green}{\line(-0.429121,-0.903247){17.1648}}}
\put(-11.6043,-48.8036){\color{gray}\circle*{6}}
\put(-11.6043,-48.8036){\thicklines\color{green}{\line(-0.916822,0.399295){36.6729}}}
\put(-48.2772,-32.8317){\color{gray}\circle*{6}}
\put(0,0){\color{gray}\circle*{6}}
\put(0,0){\thicklines\color{red}{\line(0.893104,-0.44985){35.7242}}}
\put(35.7242,-17.994){\color{gray}\circle*{6}}
\put(35.7242,-17.994){\thicklines\color{red}{\line(-0.262647,-0.964892){10.5059}}}
\put(25.2183,-56.5897){\color{gray}\circle*{6}}
\put(25.2183,-56.5897){\thicklines\color{red}{\line(0.167226,-0.985919){6.68904}}}
\put(31.9073,-96.0264){\color{gray}\circle*{6}}
\put(31.9073,-96.0264){\thicklines\color{red}{\line(0.966282,-0.257486){38.6513}}}
\put(70.5586,-106.326){\color{gray}\circle*{6}}
\put(70.5586,-106.326){\thicklines\color{red}{\line(-0.113814,-0.993502){4.55255}}}
\put(66.0061,-146.066){\color{gray}\circle*{6}}
\put(0,0){\color{gray}\circle*{6}}
\put(0,0){\thicklines\color{cyan}{\line(-0.465476,0.885061){18.619}}}
\put(-18.619,35.4024){\color{gray}\circle*{6}}
\put(-18.619,35.4024){\thicklines\color{cyan}{\line(0.973969,-0.226682){38.9588}}}
\put(20.3397,26.3351){\color{gray}\circle*{6}}
\put(20.3397,26.3351){\thicklines\color{cyan}{\line(-0.772585,0.634911){30.9034}}}
\put(-10.5637,51.7316){\color{gray}\circle*{6}}
\put(-10.5637,51.7316){\thicklines\color{cyan}{\line(-0.670441,0.741963){26.8176}}}
\put(-37.3813,81.4101){\color{gray}\circle*{6}}
\put(-37.3813,81.4101){\thicklines\color{cyan}{\line(-0.665178,0.746685){26.6071}}}
\put(-63.9884,111.278){\color{gray}\circle*{6}}
\put(0,0){\color{gray}\circle*{6}}
\put(0,0){\thicklines\color{magenta}{\line(-0.989797,-0.142482){39.5919}}}
\put(-39.5919,-5.6993){\color{gray}\circle*{6}}
\put(-39.5919,-5.6993){\thicklines\color{magenta}{\line(0.606411,0.795151){24.2565}}}
\put(-15.3354,26.1067){\color{gray}\circle*{6}}
\put(-15.3354,26.1067){\thicklines\color{magenta}{\line(-0.795362,-0.606135){31.8145}}}
\put(-47.1499,1.86133){\color{gray}\circle*{6}}
\put(-47.1499,1.86133){\thicklines\color{magenta}{\line(0.901088,-0.433636){36.0435}}}
\put(-11.1064,-15.4841){\color{gray}\circle*{6}}
\put(-11.1064,-15.4841){\thicklines\color{magenta}{\line(0.970893,-0.239514){38.8357}}}
\put(27.7293,-25.0647){\color{gray}\circle*{6}}
\put(0,0){\color{gray}\circle*{6}}
\put(0,0){\thicklines\color{yellow}{\line(0.0320912,0.999485){1.28365}}}
\put(1.28365,39.9794){\color{gray}\circle*{6}}
\put(1.28365,39.9794){\thicklines\color{yellow}{\line(-0.777568,0.628798){31.1027}}}
\put(-29.8191,65.1313){\color{gray}\circle*{6}}
\put(-29.8191,65.1313){\thicklines\color{yellow}{\line(0.249103,-0.968477){9.96414}}}
\put(-19.8549,26.3923){\color{gray}\circle*{6}}
\put(-19.8549,26.3923){\thicklines\color{yellow}{\line(-0.998738,0.0502189){39.9495}}}
\put(-59.8045,28.401){\color{gray}\circle*{6}}
\put(-59.8045,28.401){\thicklines\color{yellow}{\line(-0.998174,0.060398){39.927}}}
\put(-99.7314,30.8169){\color{gray}\circle*{6}}
\put(0,0){\color{gray}\circle*{6}}
\put(0,0){\thicklines\color{black}{\line(0.549361,-0.835585){21.9744}}}
\put(21.9744,-33.4234){\color{gray}\circle*{6}}
\put(21.9744,-33.4234){\thicklines\color{black}{\line(-0.0816381,0.996662){3.26552}}}
\put(18.7089,6.44308){\color{gray}\circle*{6}}
\put(18.7089,6.44308){\thicklines\color{black}{\line(-0.768536,-0.639806){30.7414}}}
\put(-12.0325,-19.1492){\color{gray}\circle*{6}}
\put(-12.0325,-19.1492){\thicklines\color{black}{\line(-0.240049,0.970761){9.60198}}}
\put(-21.6345,19.6812){\color{gray}\circle*{6}}
\put(-21.6345,19.6812){\thicklines\color{black}{\line(0.987449,0.157939){39.498}}}
\put(17.8635,25.9988){\color{gray}\circle*{6}}
\end{picture}\end{center}\end{minipage}\begin{minipage}{0.5\textwidth}\begin{center}
\begin{tikzpicture}
\pgfplotsset{width=8cm,tick label style={font=\footnotesize},label style={font=\small}}
\begin{axis}[xlabel={$x$},ylabel={$ p_5(x)$},ymin=0,ymax=0.4,xmin=0,xmax=5,enlargelimits=false]
\addplot [
const plot,
draw=blue, semithick
] coordinates {
(0.,0.0082) (0.05,0.024) (0.1,0.04) (0.15,0.0496) (0.2,0.0746) (0.25,0.092) (0.3,0.1124) (0.35,0.1182) (0.4,0.14) (0.45,0.1544) (0.5,0.1758) (0.55,0.198) (0.6,0.2022) (0.65,0.2302) (0.7,0.238) (0.75,0.2554) (0.8,0.2716) (0.85,0.2972) (0.9,0.314) (0.95,0.3312) (1.,0.34) (1.05,0.3554) (1.1,0.3454) (1.15,0.3468) (1.2,0.3408) (1.25,0.3514) (1.3,0.3642) (1.35,0.3578) (1.4,0.3674) (1.45,0.3548) (1.5,0.3548) (1.55,0.3716) (1.6,0.3614) (1.65,0.353) (1.7,0.3734) (1.75,0.3584) (1.8,0.3618) (1.85,0.3722) (1.9,0.3612) (1.95,0.3614) (2.,0.3682) (2.05,0.3474) (2.1,0.3452) (2.15,0.3462) (2.2,0.3584) (2.25,0.3452) (2.3,0.3288) (2.35,0.3436) (2.4,0.3194) (2.45,0.3232) (2.5,0.3192) (2.55,0.3084) (2.6,0.3058) (2.65,0.27) (2.7,0.2868) (2.75,0.2976) (2.8,0.269) (2.85,0.2588) (2.9,0.2438) (2.95,0.2368) (3.,0.2148) (3.05,0.1986) (3.1,0.1848) (3.15,0.1766) (3.2,0.1756) (3.25,0.1628) (3.3,0.1516) (3.35,0.1386) (3.4,0.1408) (3.45,0.1304) (3.5,0.1228) (3.55,0.108) (3.6,0.1066) (3.65,0.1018) (3.7,0.0956) (3.75,0.0976) (3.8,0.0892) (3.85,0.0898) (3.9,0.0744) (3.95,0.0794) (4.,0.0692) (4.05,0.067) (4.1,0.0634) (4.15,0.058) (4.2,0.0504) (4.25,0.0514) (4.3,0.0412) (4.35,0.0404) (4.4,0.0322) (4.45,0.0342) (4.5,0.0256) (4.55,0.0276) (4.6,0.0218) (4.65,0.0214) (4.7,0.0148) (4.75,0.0126) (4.8,0.0126) (4.85,0.0086) (4.9,0.004) (4.95,0.0018)
}
;
\addplot [
draw=red, thick
] table {
x y
0 0
 0.05 0.01650
 0.1 0.03300
 0.15 0.04951
 0.2 0.06604
 0.25 0.08257
 0.3 0.09918
 0.35 0.1157
 0.4 0.1324
 0.45 0.1491
 0.5 0.1658
 0.55 0.1825
 0.6 0.1994
 0.65 0.2168
 0.7 0.2333
 0.75 0.2503
 0.8 0.2674
 0.85 0.2846
 0.9 0.3019
 0.95 0.3193
 1. 0.3361
 1.05 0.3415
 1.1 0.3456
 1.15 0.3493
 1.2 0.3526
 1.25 0.3554
 1.3 0.3578
 1.35 0.3598
 1.4 0.3615
 1.45 0.3628
 1.5 0.3638
 1.55 0.3645
 1.6 0.3648
 1.65 0.3650
 1.7 0.3644
 1.75 0.3638
 1.8 0.3629
 1.85 0.3617
 1.9 0.3602
 1.95 0.3584
 2. 0.3563
 2.05 0.3539
 2.1 0.3483
 2.15 0.3482
 2.2 0.3449
 2.25 0.3413
 2.3 0.3374
 2.35 0.3331
 2.4 0.3291
 2.45 0.3235
 2.5 0.3182
 2.55 0.3124
 2.6 0.3061
 2.65 0.2994
 2.7 0.2919
 2.75 0.2841
 2.8 0.2755
 2.85 0.2655
 2.9 0.2549
 2.95 0.2423
 3. 0.2244
 3.05 0.2069
 3.1 0.1940
 3.15 0.1829
 3.2 0.1729
 3.25 0.1638
 3.3 0.1553
 3.35 0.1473
 3.4 0.1399
 3.45 0.1327
 3.5 0.1260
 3.55 0.1196
 3.6 0.1134
 3.65 0.1075
 3.7 0.1018
 3.75 0.09631
 3.8 0.09102
 3.85 0.08590
 3.9 0.08096
 3.95 0.07617
 4. 0.07152
 4.05 0.06702
 4.1 0.06264
 4.15 0.05838
 4.2 0.05425
 4.25 0.05021
 4.3 0.04629
 4.35 0.04246
 4.4 0.03872
 4.45 0.03508
 4.5 0.03152
 4.55 0.02805
 4.6 0.02465
 4.65 0.02133
 4.7 0.01809
 4.75 0.01491
 4.8 0.01180
 4.85 0.008760
 4.9 0.005780
 4.95 0.002861
 5 0
 };
\end{axis}
\end{tikzpicture}\end{center}
\end{minipage}
\begin{minipage}{0.45\textwidth}\begin{center}(a)\end{center}\end{minipage}
\begin{minipage}{0.5\textwidth}\begin{center}(b)\end{center}\end{minipage}\caption{(a)~Sample trajectories of simulated 5-step planar uniform random walks. (b)~Histogram (shown in \textit{blue}, binned with Freedman--Diaconis rule \cite{FreedmanDiaconis1981}) for distance $x $ traveled by a  rambler, constructed from $10^5$ simulated trajectories of   5-step uniform random walks in the Euclidean plane, superimposed with Kluyver's probability density function $ p_5(x)$ (\textit{red} curve). Note the close resemblance of $ p_5(x),0\leq x\leq 1$ to a straight line segment (Pearson--Fettis phenomenon).}\label{fig:p5}\end{figure}

\begin{lemma}[Alternative integral representation for $ p_5$]\label{lm:p5_alt_int}For $ x\in[0,1]$, the following identity holds:\begin{align}
p_5(x)=\frac{30}{\pi^{4}}\int_0^\infty I_0(xt)I_{0}(t)[K_0(t)]^4 xt\D t.\label{eq:p5_IKM}
\end{align}\end{lemma}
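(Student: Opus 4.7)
The plan is to execute a Wick rotation on the contour of integration in Kluyver's formula. First, I would write $J_0=\tfrac{1}{2}(H_0^{(1)}+H_0^{(2)})$ and expand
\begin{align*}
J_0(xt)[J_0(t)]^5 = \tfrac{1}{64}\bigl[H_0^{(1)}(xt)+H_0^{(2)}(xt)\bigr]\sum_{k=0}^{5}\binom{5}{k}[H_0^{(1)}(t)]^{k}[H_0^{(2)}(t)]^{5-k}
\end{align*}
into twelve Hankel monomials. Since $n=5$ is odd, none of these monomials is self-conjugate under the complex conjugation $H_0^{(1)}\leftrightarrow H_0^{(2)}$ (valid on the positive real axis), so pairing each term with its conjugate identifies the real integrand with twice the real part of its six ``$H_0^{(1)}$-heavy'' representatives, those indexed by $k\in\{3,4,5\}$; recombining the two choices of $H_0^{(1)}(xt)$ versus $H_0^{(2)}(xt)$ restores $2J_0(xt)$ and yields
\begin{align*}
\int_0^\infty J_0(xt)[J_0(t)]^5\, t\,\D t = \tfrac{1}{16}\sum_{k=3}^{5}\binom{5}{k}\R\int_0^\infty J_0(xt)[H_0^{(1)}(t)]^{k}[H_0^{(2)}(t)]^{5-k}\, t\,\D t.
\end{align*}

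Next, I would rotate each of the three remaining integrals from $t\in[0,\infty)$ to $t\in i[0,\infty)$ through a quarter-circle contour in the first quadrant. For $k\geq3$ the Hankel product oscillates as $e^{i(2k-5)t}$ at infinity, and the extra frequency $\pm x$ from $J_0(xt)$ still leaves the net exponential decaying in the upper half-plane provided $2k-5>x$, i.e.\ $x<1$; the boundary value $x=1$ is recovered by continuity of both sides in $x\in[0,1]$. Substituting the classical identities
\begin{align*}
H_0^{(1)}(i\tau) = -\tfrac{2i}{\pi}K_0(\tau),\quad H_0^{(2)}(i\tau) = 2I_0(\tau)+\tfrac{2i}{\pi}K_0(\tau),\quad J_0(ix\tau) = I_0(x\tau),
\end{align*}
and accounting for the Jacobian $i\cdot i=-1$ from $t=i\tau$, $\D t=i\,\D\tau$, each rotated integrand becomes $I_0(x\tau)$ times a complex polynomial in $I_0(\tau)$ and $K_0(\tau)$ against the measure $\tau\,\D\tau$.

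Binomial expansion of $[H_0^{(2)}(i\tau)]^{5-k}$ shows that this polynomial has $I_0(\tau)^{j}K_0(\tau)^{5-j}$-coefficient equal to $(-1)^{k}\cdot 32\cdot i^{5-j}/\pi^{5-j}$, whose real part vanishes unless $j$ is odd. Within the range $0\leq j\leq 5-k$, only $j=1$ survives for $k\in\{3,4\}$, while $k=5$ contributes nothing. Tallying the binomial weights $\binom{5}{k}\binom{5-k}{1}$ and the Jacobian sign, the three contributions collapse into a single integral $\int_0^\infty I_0(x\tau)I_0(\tau)[K_0(\tau)]^4\tau\,\D\tau$ with prefactor
\begin{align*}
\tfrac{1}{16}\cdot(-1)\cdot\bigl[\tbinom{5}{3}(-1)^3\tbinom{2}{1}+\tbinom{5}{4}(-1)^4\tbinom{1}{1}\bigr]\cdot\tfrac{32}{\pi^4}=\tfrac{30}{\pi^4},
\end{align*}
and restoring the factor of $x$ gives \eqref{eq:p5_IKM}.

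The main obstacle is the rigorous justification of the contour rotation. Specifically, one must check (i) holomorphy of the integrand on the open first quadrant, which is automatic since the Hankel functions are analytic away from the branch cut on $(-\infty,0]$; (ii) vanishing of the large quarter-arc $|t|=R$ as $R\to\infty$, via Jordan's lemma together with the strict inequality $2k-5>x$ for $k\geq3$ and $x<1$; and (iii) vanishing of a small indentation around the origin, which follows because each Hankel factor has only a logarithmic singularity there and is integrable against the $t\,\D t$ measure. Once these three analytic points are settled, the identity reduces to the binomial bookkeeping sketched above.
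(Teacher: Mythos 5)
Your proposal is correct and rests on the same key mechanism as the paper's proof: a Wick rotation between the positive real and positive imaginary axes, with Jordan's lemma admitting exactly those Hankel monomials $[H_0^{(1)}]^k[H_0^{(2)}]^{5-k}$ for which $2k-5\geq x$, and your final tally does come out to $30/\pi^4$. The difference is purely organizational --- you rotate Kluyver's integral toward the Bessel moment using the monomial basis $\{[H_0^{(1)}]^k[H_0^{(2)}]^{5-k}\}_{k=3,4,5}$, whereas the paper rotates the Bessel moment toward the real axis and then eliminates the $Y_0$ terms via two auxiliary vanishing contour integrals and the algebraic identity \eqref{eq:Y_cancel} --- so the two arguments are interchangeable.
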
\begin{proof}Let  $ Y_0(t)=-\frac{2}{\pi}\int_0^\infty\cos(t\cosh u)\D u,t>0$ be the Bessel function of the second kind (also known as the Neumann function) and\ zeroth order.
Define the Hankel function of the first kind and zeroth order as $ H_0^{(1)}(\xi)=J_0(\xi)+iY_0(\xi)$ for $\xi>0$.

Extending the definition of Bessel functions to complex arguments, we have  $ J_0(it)=I_0(t)$ and $\frac{\pi i}{2}H_0^{(1)}(it)=K_0(t)$,  so long as $ t>0$.  For $ x\in[0,3]$, we  perform a \textit{Wick rotation} as follows:\begin{align}\begin{split}\left(\frac2\pi
\right)^4\int_0^\infty I_0(xt)I_{0}(t)[K_0(t)]^4 t\D t={}&-\int_{0}^{i\infty}J_0(xz)J_0(z)[H_{0}^{(1)}(z)]^4 z\D z\\={}&-\R\int_{0}^{\infty}J_0(xt)J_0(t)[H_{0}^{(1)}(t)]^4 t\D t.\end{split}\label{eq:IIKKKK_Wick}
\end{align}Here, we can deform the path of integration from the positive $\I z $-axis to the positive $\R z$-axis, thanks to Jordan's lemma applicable to  the asymptotic behavior \begin{align}
H_0^{(1)}(z)=\sqrt{\frac{2}{\pi z}}e^{i\left(z-\frac{\pi}{4}\right)}\left[1+O\left( \frac{1}{|z|} \right)\right]\quad \text{and}\quad
J_{0}(z)=\sqrt{\frac{2}{\pi z}}\frac{e^{i\left(z-\frac{\pi}{4}\right)}+e^{-i\left(z-\frac{\pi}{4}\right)}}{2}\left[1+O\left( \frac{1}{|z|} \right)\right]
\end{align} as $ |z|\to\infty,-\pi<\arg z<\pi$.

Spelling out $H_{0}^{(1)}(t)=J+iY $ with self-explanatory abbreviations for $ t>0$, we rewrite \eqref{eq:IIKKKK_Wick} as \begin{align}
\left(\frac2\pi
\right)^4\int_0^\infty I_0(xt)I_{0}(t)[K_0(t)]^4 t\D t=-\int_{0}^{\infty}J_0(xt)J(J^{4}-6J^{2}Y^{2}+Y^{4}) t\D t.\label{eq:IIKKKK_Wick'}
\end{align}Now, Jordan's lemma brings us two vanishing identities:\begin{align}
\int_{i0^+-\infty}^{i0^++\infty}J_0(xz)[H_0^{(1)}(z)]^5z\D z={}&0,&\forall x\in[0,5],\intertext{and}\int_{i0^+-\infty}^{i0^++\infty}J_0(xz)[J_{0}(z)]^{2}[H_0^{(1)}(z)]^3z\D z={}&0,& \forall x\in[0,1],\label{eq:Jordan_2}
\end{align} where the contours close upwards. Noting that  $ J_0(\xi)=J_0(-\xi)$, $ H_0^{(1)}(\pm\xi+i0^+)=\pm J_0(\xi)+iY_0(\xi)$ for $\xi>0$, and \begin{align}
\begin{split}&-J (J^4-6 J^2 Y^2+Y^4)\\{}&+\frac{(J+i Y)^5-(-J+i Y)^5}{10} +
\frac{2J^2}{3}  [(J+i Y)^3-(-J+i Y)^3]=\frac{8 J^5}{15},\end{split}\label{eq:Y_cancel}
\end{align}we simplify \eqref{eq:IIKKKK_Wick'} into\begin{align}
\left(\frac2\pi
\right)^4\int_0^\infty I_0(xt)I_{0}(t)[K_0(t)]^4 t\D t=\frac{8}{15}\int_{0}^{\infty}J_0(xt)J^{5} t\D t,
\end{align}as claimed in  \eqref{eq:p5_IKM}.
 \end{proof}

 The identity  in  \eqref{eq:p5_IKM} has some importance to  the quantitative understanding of the 3-loop sunrise diagram (according to Broadhurst's normalization \cite[(84)]{Broadhurst2016})\begin{align}
\;\;\;\;\;
\dia{\put(-50,0){\line(-1,0){50}}
\put(50,0){\line(1,0){50}}
\put(0,0){\circle{100}}
\qbezier(-50, 0)(0, 35)(50, 0)
\qbezier(-50, 0)(0, -35)(50, 0)
\put(50,0){\vtx}
\put(-50,0){\vtx}
}{}\;\;=2^{3}\int_0^\infty I_{0}(t)[K_0(t)]^4 t\D t
\end{align}  in 2-dimensional quantum field theory. The leading Taylor coefficient $ r_{5,0}^{\phantom'}=p'_5(0^{+})=\int_0^\infty [J_0(t)]^5t\D t$ had been evaluated in closed form \cite[Theorem 5.1]{BSWZ2012} before the first rigorous proof \cite{BlochKerrVanhove2015,Samart2016} of \begin{align}
\int_{0}^\infty I_0(t)[K_0(t)]^4t\D t=\frac{1}{240 \sqrt{5}}\Gamma \left(\frac{1}{15}\right) \Gamma \left(\frac{2}{15}\right) \Gamma \left(\frac{4}{15}\right) \Gamma \left(\frac{8}{15}\right)\label{eq:3loop_sunrise}
\end{align}was found, using sophisticated techniques in algebraic geometry and number theory. Now we see that  the evaluations in \eqref{eq:r50} and  \eqref{eq:3loop_sunrise} are equivalent to each other, up to contour deformations that we showed in \cite[Theorem 2.2.2]{Zhou2017WEF} and Lemma \ref{lm:p5_alt_int}  above.

 Unlike Kluyver's original integral representation for $ p_5(x)$, we can differentiate (with respect to $x$) under the integral sign in the Feynman diagram $ \int_0^\infty I_0(xt)I_{0}(t)[K_0(t)]^4 t\D t$, to obtain an integral representation for $ p'''_5(0^{+})$.
 With this convenient operation, we give a new proof of Borwein's conjectural identity  in \eqref{eq:B_conj}, without invoking the 4-dimensional uniform random walks studied by Borwein--Straub--Vignat \cite[Example 4.15]{BSV2016}.   \begin{theorem}[Borwein--Straub--Vignat]We have \begin{align}
r_{5,1}=\frac{15}{2\pi^{4}}\int_0^\infty I_{0}(t)[K_0(t)]^4 t^{3}\D t=\frac{13}{225}r_{5,0}-\frac{2}{5\pi^4r_{5,0}}.
\end{align}\end{theorem}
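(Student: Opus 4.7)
The plan is to expand the Bessel function $I_0(xt)$ as a Maclaurin series in $x$ within the Feynman-diagram representation of Lemma \ref{lm:p5_alt_int}, read off the Taylor coefficients of $p_5$ by matching with $p_5(x)=\sum_{k\geq 0}r_{5,k}\,x^{2k+1}$, and then invoke a previously established closed-form evaluation of a cubic Bessel moment.

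I would start from Lemma \ref{lm:p5_alt_int}, which for $x\in[0,1]$ rewrites as
\[ \frac{p_5(x)}{x}=\frac{30}{\pi^{4}}\int_0^\infty I_0(xt)\,I_{0}(t)[K_0(t)]^4\, t\,\D t. \]
The large-$t$ asymptotics $I_0(t)\sim e^t/\sqrt{2\pi t}$ and $K_0(t)\sim\sqrt{\pi/(2t)}\,e^{-t}$ make the integrand decay like $e^{-(3-x)t}/t^{2}$, so the integral converges absolutely for $x\in[0,3)$ and we may pass to a power series by Tonelli's theorem (every term being non-negative). Substituting $I_0(xt)=\sum_{k\geq0}(xt/2)^{2k}/(k!)^{2}$ and matching coefficients of $x^{2k}$ gives
\[ r_{5,k}=\frac{30}{4^{k}(k!)^2\pi^{4}}\int_0^\infty I_{0}(t)[K_0(t)]^4\, t^{2k+1}\D t,\qquad k\in\mathbb Z_{\geq 0}. \]
The case $k=0$ is just the equivalence of \eqref{eq:r50} and \eqref{eq:3loop_sunrise} already noted in the text; the case $k=1$ is the first equality of the theorem, $r_{5,1}=\frac{15}{2\pi^{4}}\int_0^\infty I_{0}(t)[K_0(t)]^4\, t^{3}\D t$.

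For the second equality, I would invoke Broadhurst's conjectural evaluation of $\int_0^\infty I_{0}(t)[K_0(t)]^4\, t^{3}\D t$ from \cite[(96)]{Bailey2008}, rigorously established in \cite[\S3]{Zhou2017WEF}. Combining this evaluation with the $k=0$ identity $\int_0^\infty I_0(t)[K_0(t)]^4\, t\,\D t=\pi^{4} r_{5,0}/30$ and simplifying yields Borwein's identity $\frac{15}{2\pi^{4}}\int_0^\infty I_{0}(t)[K_0(t)]^4\, t^{3}\D t=\frac{13}{225}r_{5,0}-\frac{2}{5\pi^4 r_{5,0}}$.

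The main obstacle lies outside the present argument: it is the prerequisite cubic Bessel-moment evaluation, which depends on the arithmetic/modular techniques of \cite{Zhou2017WEF}. Granted that input, the theorem is an elementary consequence of power-series expansion — an operation that is legitimate \emph{after} the Wick rotation accomplished in Lemma \ref{lm:p5_alt_int} but \emph{not} directly from Kluyver's original formula $p_5(x)=\int_0^\infty J_0(xt)[J_0(t)]^5 xt\,\D t$, since $\int_0^\infty [J_0(t)]^5 t^{3}\D t$ diverges. This divergence obstructs any naive differentiation of Kluyver's integral under the integral sign, and is precisely the reason that the $I$--$K$ representation of Lemma \ref{lm:p5_alt_int} provides a decisive technical advantage for extracting higher Taylor coefficients.
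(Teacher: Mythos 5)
Your proposal is correct and follows essentially the same route as the paper: the first equality is obtained by differentiating (equivalently, expanding $I_0(xt)$ term by term in) the representation of Lemma \ref{lm:p5_alt_int}, and the second equality is imported from the closed-form evaluation of $\int_0^\infty I_0(t)[K_0(t)]^4t^3\,\D t$ proved in \cite[Theorem 3.3]{Zhou2017WEF}. Your added remarks on the justification of the interchange and on the divergence of $\int_0^\infty[J_0(t)]^5t^3\,\D t$ accurately reflect the paper's own motivation for working with the $I$--$K$ representation.
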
\begin{proof}The first equality comes from third-order derivatives of \eqref{eq:p5_IKM} and the second equality has been verified in \cite[Theorem 3.3]{Zhou2017WEF}. \end{proof}
\section{Maclaurin expansions of Kluyver's probability density functions \label{sec:Maclaurin}}
Carrying the analysis in  \S\ref{sec:new_pf_B_conj} a little further, we obtain the following explicit formula for the Taylor coefficients as Bessel moments:\begin{align}
r_{5,k}=\frac{30}{4^{k}(k!)^2\pi^4}\int_0^\infty I_{0}(t)[K_0(t)]^4 t^{2k+1}\D t=:\frac{30s_{5,2k+1}}{4^{k}(k!)^2\pi^4}.\label{eq:r5k_int_repn}
\end{align} Therefore, we expect that the four-term recurrence relations for $ r_{5,k}$ \cite[(5.6)]{BSWZ2012} and $ s_{5,2k+1}$ \cite[(10)--(11)]{Bailey2008} to be compatible with each other. In particular, all these Taylor coefficients are recursively determined by the following initial conditions (cf.~conjectures in \cite[(95)--(97)]{Bailey2008} proved by  \cite[Theorem 3.3]{Zhou2017WEF})\begin{align}\left\{\begin{array}{r@{\;=\;}l@{\;=\;}l}r_{5,0}=\dfrac{30s_{5,1}}{\pi^4}&\dfrac{30C}{\pi^2}&0.3299338011...\\[12pt]r_{5,1}=\dfrac{15s_{5,3}}{2\pi^{4}}&\dfrac{2}{15\pi^2}\left( 13C-\dfrac{1}{10C} \right)&0.006616730259...\\[12pt]r_{5,2}=\dfrac{15s_{5,5}}{32\pi^{4}}&\dfrac{2}{225\pi^{2}}\left( 43C-\dfrac{19}{40C} \right)&0.0002623323540...\end{array}\right.
\label{eq:leading_coeff}\end{align} where $ C=\frac{1}{240 \sqrt{5}\pi^{2}}\Gamma \left(\frac{1}{15}\right) \Gamma \left(\frac{2}{15}\right) \Gamma \left(\frac{4}{15}\right) \Gamma \left(\frac{8}{15}\right)$ is the \textit{Bologna constant} attributed to Broadhurst \cite{Bailey2008} and Laporta \cite{Laporta2008}.

 Like the rescaled Bessel moments $ s_{5,2k+1}/\pi^2$, the rescaled Taylor coefficients $ r_{5,k}\pi^2$ are always \textit{strictly positive} numbers that are rational combinations of  the Bologna constant  $C$ and its reciprocal $1/C$. Here, we note that the strict positivity of the sequence  $ r_{5,k},k\in\mathbb Z_{\geq0}$ does not follow immediately from either the recurrence relation in \cite[(5.6)]{BSWZ2012} or Kluyver's integral representation for $ p_5(x)$ in \eqref{eq:Kluyver_pn}. Instead, it hinges on the Taylor expansion of $ I_0(xt)$ as well as  the positivity of $ I_0(t)$ and $K_0(t) $.

Numerical experiments led the authors of \cite{BSWZ2012} to the impression that the Maclaurin series $\sum^\infty_{k=0}r_{5,k}x^{2k+1}$ ``appears to converge for $ |x|<3$''. Borwein and coworkers further explained this as ``in accordance with $ \frac19$ being a root of the characteristic polynomial of the recurrence [for $ r_{5,k}$]''.
The following theorem offers an analytic  perspective on the convergence of this Maclaurin series.
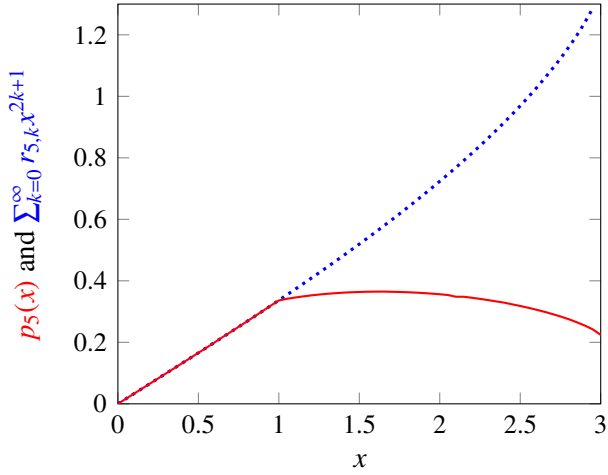
\begin{figure}[t]\begin{minipage}{0.5\textwidth}\begin{center}\begin{tikzpicture}
\pgfplotsset{width=8cm,tick label style={font=\footnotesize},label style={font=\small}}
\begin{axis}[xlabel={$x$},ylabel={$\textcolor{red}{ p_5(x)}$ and \textcolor{blue}{$ \sum^\infty_{k=0}r_{5,k}x^{2k+1} $}},ymin=0,ymax=1.3,xmin=0,xmax=3,enlargelimits=false]
\addplot [
smooth, dotted,
draw=blue, very thick
] table {
x y
0. 0
 0.05 0.01650
 0.1 0.03300
 0.15 0.04951
 0.2 0.06604
 0.25 0.08259
 0.3 0.09916
 0.35 0.1158
 0.4 0.1324
 0.45 0.1491
 0.5 0.1658
 0.55 0.1826
 0.6 0.1994
 0.65 0.2163
 0.7 0.2333
 0.75 0.2503
 0.8 0.2674
 0.85 0.2846
 0.9 0.3019
 0.95 0.3193
 1. 0.3368
 1.05 0.3544
 1.1 0.3722
 1.15 0.3901
 1.2 0.4081
 1.25 0.4262
 1.3 0.4445
 1.35 0.4630
 1.4 0.4816
 1.45 0.5005
 1.5 0.5195
 1.55 0.5387
 1.6 0.5582
 1.65 0.5779
 1.7 0.5978
 1.75 0.6180
 1.8 0.6385
 1.85 0.6593
 1.9 0.6804
 1.95 0.7019
 2. 0.7237
 2.05 0.7459
 2.1 0.7685
 2.15 0.7915
 2.2 0.8151
 2.25 0.8391
 2.3 0.8638
 2.35 0.8890
 2.4 0.9150
 2.45 0.9416
 2.5 0.9691
 2.55 0.9975
 2.6 1.027
 2.65 1.058
 2.7 1.090
 2.75 1.123
 2.8 1.159
 2.85 1.197
 2.9 1.239
 2.95 1.286
 }
;
\addplot [
draw=red, thick
] table {
x y
0 0
 0.05 0.01650
 0.1 0.03300
 0.15 0.04951
 0.2 0.06604
 0.25 0.08257
 0.3 0.09918
 0.35 0.1157
 0.4 0.1324
 0.45 0.1491
 0.5 0.1658
 0.55 0.1825
 0.6 0.1994
 0.65 0.2168
 0.7 0.2333
 0.75 0.2503
 0.8 0.2674
 0.85 0.2846
 0.9 0.3019
 0.95 0.3193
 1. 0.3361
 1.05 0.3415
 1.1 0.3456
 1.15 0.3493
 1.2 0.3526
 1.25 0.3554
 1.3 0.3578
 1.35 0.3598
 1.4 0.3615
 1.45 0.3628
 1.5 0.3638
 1.55 0.3645
 1.6 0.3648
 1.65 0.3650
 1.7 0.3644
 1.75 0.3638
 1.8 0.3629
 1.85 0.3617
 1.9 0.3602
 1.95 0.3584
 2. 0.3563
 2.05 0.3539
 2.1 0.3483
 2.15 0.3482
 2.2 0.3449
 2.25 0.3413
 2.3 0.3374
 2.35 0.3331
 2.4 0.3291
 2.45 0.3235
 2.5 0.3182
 2.55 0.3124
 2.6 0.3061
 2.65 0.2994
 2.7 0.2919
 2.75 0.2841
 2.8 0.2755
 2.85 0.2655
 2.9 0.2549
 2.95 0.2423
 3. 0.2244
 };
\end{axis}
\end{tikzpicture}\end{center}\end{minipage}\begin{minipage}{0.5\textwidth}
\caption{(Adapted from \cite[Fig.~3]{BSWZ2012}) Kluyver's 5-step density function $ p_5(x)$ (\textit{red solid}  curve) and the Maclaurin series $ \sum^\infty_{k=0}r_{5,k}x^{2k+1}=\frac{30}{\pi^{4}}\int_0^\infty I_0(xt)I_{0}(t)[K_0(t)]^4 xt\D t$ (\textit{blue dotted}  curve) for $0\leq x\leq 3 $.\label{fig:p5_Taylor}}\end{minipage}\end{figure}

 \begin{theorem}[Taylor expansion for $ p_5(x)$]\label{thm:p5_Taylor}The Maclaurin series\begin{align}
\sum^\infty_{k=0}r_{5,k}x^{2k+1}\label{eq:Taylor_p5}
\end{align}converges uniformly to a continuous function for $  x\in[-3, 3]$, and agrees with $ p_5(x)$ for $  x\in[0,1]$. (See Fig.~\ref{fig:p5_Taylor} for comparison.)\end{theorem}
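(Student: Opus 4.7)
The plan is to extract the Taylor series directly from the Feynman-integral representation of Lemma \ref{lm:p5_alt_int}, and then to verify uniform convergence on $[-3,3]$ by recognizing the sum $\sum r_{5,k}3^{2k+1}$ itself as a Bessel moment whose convergence is dictated by a clean exponential cancellation.

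For both halves of the theorem, the starting point is Tonelli's theorem applied to
\begin{equation*}
\frac{30}{\pi^{4}}\int_{0}^{\infty} I_{0}(xt)I_{0}(t)[K_{0}(t)]^{4} xt\D t
\end{equation*}
combined with the power-series expansion $I_{0}(xt)=\sum_{k\geq 0}(xt)^{2k}/[4^{k}(k!)^{2}]$. Every term of the integrand is nonnegative for $x\geq 0$, so the sum and integral may be interchanged with equality in $[0,\infty]$, yielding $\sum_{k\geq 0}r_{5,k}x^{2k+1}$ with $r_{5,k}$ exactly as in \eqref{eq:r5k_int_repn}. On $[0,1]$, Lemma \ref{lm:p5_alt_int} then identifies the left-hand side as $p_{5}(x)$, which settles the ``agrees with $p_{5}$ on $[0,1]$'' assertion at once.

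For uniform convergence on the closed interval $[-3,3]$, the Weierstrass $M$-test, combined with the positivity of the $r_{5,k}$, reduces matters to the single numerical estimate $\sum_{k\geq 0} r_{5,k}3^{2k+1}<\infty$. Applying Tonelli a second time at $x=3$, this sum equals
\begin{equation*}
\frac{90}{\pi^{4}}\int_{0}^{\infty} I_{0}(3t)I_{0}(t)[K_{0}(t)]^{4} t\D t,
\end{equation*}
so everything comes down to the finiteness of this Bessel moment. Here the classical large-$t$ asymptotics $I_{0}(t)\sim e^{t}/\sqrt{2\pi t}$ and $K_{0}(t)\sim\sqrt{\pi/(2t)}e^{-t}$ produce a crucial exponential cancellation $e^{3t}\cdot e^{t}\cdot e^{-4t}=1$, leaving the integrand $I_{0}(3t)I_{0}(t)[K_{0}(t)]^{4} t=O(t^{-2})$ at infinity, which is integrable; at the origin the only blow-up is the mild $t(\log t)^{4}$ coming from the four factors of $K_{0}$, also integrable.

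The main technical point is upgrading these one-sided asymptotic statements to honest $L^{1}$ majorants valid uniformly on $[1,\infty)$: this is cleanly handled through the sharp two-sided inequalities for $I_{0}$ and $K_{0}$ in Watson's treatise or the DLMF, which yield a single explicit envelope of the form $\mathrm{const}\cdot t^{-2}$. The $M$-test then delivers uniform convergence on $[-3,3]$, and the limit is continuous as a uniform limit of polynomials. I note that this argument furnishes an analytic explanation of the radius-$3$ convergence observed numerically in \cite{BSWZ2012}, independent of the four-term recurrence; as Fig.~\ref{fig:p5_Taylor} illustrates, on $(1,3]$ the series continues to converge but no longer represents $p_{5}(x)$, the breakdown reflecting precisely the restriction $x\in[0,1]$ forced by the Jordan's-lemma step \eqref{eq:Jordan_2} in the proof of Lemma \ref{lm:p5_alt_int}.
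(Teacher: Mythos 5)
Your proposal is correct, and it reaches uniform convergence by a different mechanism than the paper. Both arguments begin the same way: expand $I_0(xt)$ in its Taylor series inside the Feynman integral and integrate term by term (your Tonelli step is the paper's invocation of Levi's monotone convergence theorem), which identifies the partial sums of \eqref{eq:Taylor_p5} with truncations of $\frac{30}{\pi^4}\int_0^\infty I_0(xt)I_0(t)[K_0(t)]^4xt\,\mathrm{d}t$ and yields \eqref{eq:r5k_int_repn}; agreement with $p_5$ on $[0,1]$ then follows from Lemma \ref{lm:p5_alt_int} in both treatments. Where you diverge is the uniformity step. The paper proves that the limit function is continuous on $[-3,3]$ by a separate dominated-convergence argument and then applies Dini's theorem to the monotone sequence of partial sums on the compact interval $[0,3]$. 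You instead run the Weierstrass $M$-test with $M_k=r_{5,k}3^{2k+1}$, using the positivity of the coefficients and a second application of Tonelli at the endpoint $x=3$ to recognize $\sum_k M_k$ as the finite Bessel moment $\frac{90}{\pi^4}\int_0^\infty I_0(3t)I_0(t)[K_0(t)]^4t\,\mathrm{d}t$, whose convergence rests on the exponential cancellation $e^{3t}e^te^{-4t}=1$ and the resulting $O(t^{-2})$ decay. Your route is slightly more economical: continuity of the sum comes for free as a uniform limit of polynomials rather than requiring a separate argument, and the single number $\sum_k r_{5,k}3^{2k+1}<\infty$ carries the whole burden. The paper's Dini argument, on the other hand, generalizes more directly to the signed-coefficient situations encountered later (Theorems \ref{thm:p7_Taylor} and \ref{thm:pn_Mac}), where one must first split into fixed-sign pieces before either method applies. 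Both proofs hinge on the same essential input — strict positivity of the $r_{5,k}$ and integrability of $I_0(3t)I_0(t)[K_0(t)]^4t$ — so the difference is one of packaging, but yours is a legitimate and arguably cleaner packaging.
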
\begin{proof}First we recall the asymptotic behavior of modified Bessel functions as follows:\begin{align}
I_{0}(t)=\frac{e^{t}}{\sqrt{2\pi t}}\left[ 1+O\left( \frac{1}{t} \right) \right],\quad K_0(t)=\frac{\pi e^{-t}}{\sqrt{2\pi t}}\left[ 1+O\left( \frac{1}{t} \right) \right],
\end{align}where $t $ is large and positive. Thus, for each fixed  $ x\in[-3,3]$, the function $f_{x}(t)=I_0(xt)I_{0}(t)[K_0(t)]^4 t$, $t>0$ is Lebesgue integrable. Furthermore, we have $ |f_x(t)|<f_y(t)$ for all $ t>0$, if $ |x|<|y|$.

By   Levi's monotone convergence theorem (or Lebesgue's dominated convergence theorem), we have \begin{align}\begin{split}
\sum^n_{k=0}r_{5,k}x^{2k+1}={}&\frac{30}{\pi^{4}}\int_0^\infty\left[
\sum^n_{k=0}\frac{1}{(k!)^{2}}\left(\frac{xt}{2}\right)^{2k}\right]I_{0}(t)[K_0(t)]^4 xt\D t\\\to{}& \frac{30}{\pi^{4}}\int_0^\infty I_0(xt)I_{0}(t)[K_0(t)]^4 xt\D t,\quad \text{as }n\to\infty,\end{split}
\end{align}for each fixed $ x\in[-3,3]$. Appealing again to Lebesgue's dominated convergence theorem, we can verify that \begin{align}
\lim_{n\to\infty}\int_0^\infty I_0(x_{n}t)I_{0}(t)[K_0(t)]^4 t\D t=\int_0^\infty I_0(xt)I_{0}(t)[K_0(t)]^4 t\D t
\end{align} holds for any sequence $ \{x_n|n\in\mathbb Z_{>0}\}$ in $ [-3,3]$ that converges to a point $x\in[-3,3]$.
This shows that $ \int_0^\infty I_0(xt)I_{0}(t)[K_0(t)]^4 xt\D t$ defines a continuous function with respect to $ x\in[-3,3]$.

Since we have a monotone sequence of continuous functions $ S_n(x)=\sum^n_{k=0}r_{5,k}x^{2k+1}$ converging pointwise to a continuous function $S(x)=\frac{30}{\pi^{4}}\int_0^\infty I_0(xt)I_{0}(t)[K_0(t)]^4 xt\D t $ on a compact interval $ [0,3]$, the convergence is uniform (\textit{i.e.}~$\lim_{n\to\infty}\max_{x\in[0,3]}|S_n(x)-S(x)|=0$), according to Dini's theorem. The uniformity extends to $x\in[-3,3] $ by symmetry.

 Therefore, for $ x\in[-3,3]$, the Maclaurin series in question converges uniformly  to a continuous function  $S(x)= \frac{30}{\pi^{4}}\int_0^\infty I_0(xt)I_{0}(t)[K_0(t)]^4 xt\D t$, which  agrees with $ p_5(x)$ when $  x\in[0, 1]$, in view of \eqref{eq:p5_IKM}. \end{proof}

Without prior numerical knowledge of  the  Taylor coefficients in \eqref{eq:leading_coeff},  we can still inspect the integral representation of $ r_{5,k}$ in \eqref{eq:r5k_int_repn}   and conclude that the magnitude of $ r_{5,0}$ overwhelms all subleading terms. In the light of this, the probability density  $ p_5(x),0\leq x\leq 1$, when approximated by truncated Maclaurin series, is almost a straight line segment with slope $ r_{5,0}^{\phantom'}=p'_5(0^{+})$. This approximate linearity is called the \textit{Pearson--Fettis phenomenon}, as described by Karl Pearson  in 1906 \cite{Pearson}:

\begin{center}\begin{minipage}{0.8\textwidth}``the graphical construction (cf.~Fig.~\ref{fig:p5}\textit{b}), however carefully reinvestigated, did not permit
of our considering the curve to be anything but a straight line... Even if it is not
absolutely true, it exemplifies the extraordinary power of such integrals of $J$
products [\textit{i.e.} \eqref{eq:Kluyver_pn} divided by $x$, while setting $ n=5$] to give extremely close approximations to such simple
forms as horizontal lines.''\end{minipage}\end{center}\noindent With an arduous analysis of the Bessel function $ J_0$, Henry Fettis established the non-linearity of  $ p_5(x),0\leq x\leq 1$ in 1963 \cite{Fettis1963}.

Had $ p_5(x),0\leq x\leq 1$ been a straight line segment, we would expect an equality between $ p'_5(0^{+})$ and $p_5^{\phantom'}(1)$.
Numerically, the latter exceeds the former by a fraction of about 2\%. Even without actually computing these quantities, we can recover Fettis' result  on the non-linearity of  $ p_5(x),0\leq x\leq 1$, from a comparison of Bessel moments. \begin{theorem}[Fettis non-linearity]We have an inequality $ p'_5(0^{+})<p_5^{\phantom'}(1)$. More precisely, we have a lower bound\begin{align}
p_5^{\phantom'}(1)-p'_5(0^{+})>r_{5,1}^{\phantom'}=0.006616730259...
\end{align}\end{theorem}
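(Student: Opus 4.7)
The plan is to exploit the Maclaurin expansion established in Theorem~\ref{thm:p5_Taylor} combined with the strict positivity of the Taylor coefficients. Since $p_5(x) = \sum_{k=0}^\infty r_{5,k} x^{2k+1}$ holds for $x \in [0,1]$, I would first set $x=1$ to obtain $p_5(1) = \sum_{k=0}^\infty r_{5,k}$, a valid identity because the series converges uniformly on $[-3,3]$ and agrees with $p_5$ on the unit interval. Using $p_5'(0^+) = r_{5,0}$, this immediately yields the decomposition
\begin{equation*}
p_5(1) - p_5'(0^+) = \sum_{k=1}^\infty r_{5,k} = r_{5,1} + \sum_{k=2}^\infty r_{5,k}.
\end{equation*}

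The next step is to argue that the tail $\sum_{k=2}^\infty r_{5,k}$ is strictly positive. For this I would invoke the integral representation
\begin{equation*}
r_{5,k} = \frac{30}{4^k (k!)^2 \pi^4} \int_0^\infty I_0(t) [K_0(t)]^4 t^{2k+1}\, dt,
\end{equation*}
noted in \eqref{eq:r5k_int_repn}. Since $I_0(t) > 0$ and $K_0(t) > 0$ for all $t > 0$, the integrand is strictly positive (and integrable, by the usual exponential cancellation $I_0(t)[K_0(t)]^4 \sim (\pi/2)^4/(2\pi t)^{5/2} \cdot e^{-3t}$ for large $t$). Hence $r_{5,k} > 0$ for every $k \in \mathbb{Z}_{\geq 0}$; in particular $r_{5,2} > 0$, forcing $\sum_{k=2}^\infty r_{5,k} > 0$ and thus $p_5(1) - p_5'(0^+) > r_{5,1}$.

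The numerical value $r_{5,1} = 0.006616730259\ldots$ is already recorded in the display \eqref{eq:leading_coeff} as an explicit rational combination of the Bologna constant $C$ and its reciprocal, so no further computation is required.

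There is no real obstacle here: the argument is a three-line consequence of (i) the uniform convergence on $[-3,3]$ proved in Theorem~\ref{thm:p5_Taylor}, which legitimises evaluating the Maclaurin series at the boundary point $x=1$ where Kluyver's representation and the Feynman-diagram representation happen to agree, and (ii) the manifest positivity of the Bessel-moment representation of each $r_{5,k}$. The only subtle point worth flagging in the write-up is that the positivity of $r_{5,k}$ is not obvious from either Kluyver's original formula \eqref{eq:Kluyver_pn} or the four-term recursion of \cite{BSWZ2012}, and it is precisely the passage to the modified-Bessel integral $\int_0^\infty I_0(t)[K_0(t)]^4 t^{2k+1}\,dt$ via the Wick rotation of Lemma~\ref{lm:p5_alt_int} that makes positivity transparent.
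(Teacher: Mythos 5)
Your proposal is correct and follows essentially the same route as the paper: the quantitative bound is obtained exactly as in the paper's proof, by evaluating the uniformly convergent Maclaurin series of Theorem~\ref{thm:p5_Taylor} at $x=1$ and discarding the strictly positive tail $\sum_{k\geq 2} r_{5,k}$, whose positivity rests on the Bessel-moment representation \eqref{eq:r5k_int_repn}. The only difference is cosmetic: the paper additionally records a standalone qualitative proof of $p_5'(0^+)<p_5(1)$ via $I_0(t)>1$ applied to the Wick-rotated integrals, which your argument subsumes.
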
\begin{proof}From  the Wick rotation, we obtain\begin{align}
p'_5(0^+)=\frac{30}{\pi^{4}}\int_0^\infty I_{0}(t)[K_0(t)]^4 t\D t<\frac{30}{\pi^{4}}\int_0^\infty [I_{0}(t)]^{2}[K_0(t)]^4 t\D t=p_5^{\phantom'}(1),
\end{align}where the strict inequality descends from the elementary fact that  $ I_0(t)=\frac{1}{\pi}\int_0^\pi e^{t\cos\theta}\D\theta>\frac{1}{\pi}\int_0^\pi \D\theta=1$ for $ t>0$.
This
establishes the Fettis non-linearity in a qualitative manner.

From the convergent Taylor expansion studied in Theorem \ref{thm:p5_Taylor},  we have\begin{align}
p_5^{\phantom'}(1)-p'_5(0^+)=\sum^\infty_{k=1}r_{5,k}^{\phantom'}>r_{5,1}^{\phantom'}.
\end{align}It is worth noting that this lower bound estimate is already fairly close to the actual value of $ p_5^{\phantom'}(1)-p'_5(0^+)=0.006894160706...$ \end{proof}

Before stepping into the analysis of  $ p_n(x)$ for $ n>5$ in the next section, we briefly revisit some known results for $ p_3(x)$ and $p_4(x)
$ (see Fig.~\ref{fig:p3p4}), from the perspective of Feynman integrals in 2-dimensional quantum field theory.
\begin{figure}[hb]\begin{minipage}{0.38\textwidth}\begin{center}
\begin{tikzpicture}
\pgfplotsset{width=7cm,tick label style={font=\footnotesize},label style={font=\small}}
\begin{axis}[xlabel={$x$},ylabel={$ p_3(x)$},ymin=0,ymax=0.85,xmin=0,xmax=3,enlargelimits=false]
\addplot [
const plot,
draw=blue, semithick
] coordinates {
(0.,0.0102) (0.05,0.0258) (0.1,0.0436) (0.15,0.0706) (0.2,0.0844) (0.25,0.098) (0.3,0.133) (0.35,0.1476) (0.4,0.164) (0.45,0.189) (0.5,0.2214) (0.55,0.2418) (0.6,0.2642) (0.65,0.297) (0.7,0.34) (0.75,0.3736) (0.8,0.4338) (0.85,0.4644) (0.9,0.6042) (0.95,0.8342) (1.,0.8346) (1.05,0.625) (1.1,0.5634) (1.15,0.5252) (1.2,0.5002) (1.25,0.4668) (1.3,0.4614) (1.35,0.4152) (1.4,0.426) (1.45,0.4036) (1.5,0.3952) (1.55,0.3924) (1.6,0.3802) (1.65,0.3792) (1.7,0.3688) (1.75,0.3718) (1.8,0.3606) (1.85,0.3414) (1.9,0.3544) (1.95,0.3538) (2.,0.3438) (2.05,0.3348) (2.1,0.3414) (2.15,0.329) (2.2,0.2992) (2.25,0.3106) (2.3,0.309) (2.35,0.3076) (2.4,0.307) (2.45,0.304) (2.5,0.2966) (2.55,0.2944) (2.6,0.2936) (2.65,0.2836) (2.7,0.292) (2.75,0.2892) (2.8,0.2768) (2.85,0.2762) (2.9,0.2764) (2.95,0.2748)
}
;
\addplot [
draw=red, thick
] table {
x y
0. 0
 0.05 0.01839
 0.1 0.03688
 0.15 0.05555
 0.2 0.07451
 0.25 0.09387
 0.3 0.1138
 0.35 0.1343
 0.4 0.1556
 0.45 0.1780
 0.5 0.2017
 0.55 0.2269
 0.6 0.2541
 0.65 0.2839
 0.7 0.3170
 0.75 0.3548
 0.8 0.3993
 0.85 0.4542
 0.9 0.5280
 0.95 0.6472
 1. 15
 1.05 0.6843
 1.1 0.5918
 1.15 0.5408
 1.2 0.5063
 1.25 0.4806
 1.3 0.4603
 1.35 0.4436
 1.4 0.4295
 1.45 0.4173
 1.5 0.4066
 1.55 0.3970
 1.6 0.3884
 1.65 0.3806
 1.7 0.3734
 1.75 0.3668
 1.8 0.3606
 1.85 0.3549
 1.9 0.3495
 1.95 0.3444
 2. 0.3396
 2.05 0.3351
 2.1 0.3307
 2.15 0.3266
 2.2 0.3227
 2.25 0.3189
 2.3 0.3153
 2.35 0.3118
 2.4 0.3085
 2.45 0.3052
 2.5 0.3021
 2.55 0.2991
 2.6 0.2962
 2.65 0.2933
 2.7 0.2906
 2.75 0.2879
 2.8 0.2853
 2.85 0.2828
 2.9 0.2804
 2.95 0.2780
 3. 0.2757
  };
\end{axis}
\end{tikzpicture}\end{center}
\end{minipage}\begin{minipage}{0.5\textwidth}\begin{center}
\begin{tikzpicture}
\pgfplotsset{width=7cm,tick label style={font=\footnotesize},label style={font=\small}}
\begin{axis}[xlabel={$x$},ylabel={$ p_4(x)$},ymin=0,ymax=0.55,xmin=0,xmax=4,enlargelimits=false]
\addplot [
const plot,
draw=blue, semithick
] coordinates {
(0.,0.021) (0.05,0.0556) (0.1,0.0808) (0.15,0.1068) (0.2,0.119) (0.25,0.1284) (0.3,0.154) (0.35,0.1794) (0.4,0.188) (0.45,0.1954) (0.5,0.2232) (0.55,0.2382) (0.6,0.2536) (0.65,0.2556) (0.7,0.2716) (0.75,0.2782) (0.8,0.2914) (0.85,0.2916) (0.9,0.3204) (0.95,0.3176) (1.,0.3372) (1.05,0.33) (1.1,0.3516) (1.15,0.3748) (1.2,0.3728) (1.25,0.3886) (1.3,0.4008) (1.35,0.3994) (1.4,0.3964) (1.45,0.4158) (1.5,0.4228) (1.55,0.4242) (1.6,0.4498) (1.65,0.4556) (1.7,0.4794) (1.75,0.4582) (1.8,0.467) (1.85,0.486) (1.9,0.4886) (1.95,0.4972) (2.,0.4306) (2.05,0.403) (2.1,0.3616) (2.15,0.341) (2.2,0.344) (2.25,0.3154) (2.3,0.3078) (2.35,0.2876) (2.4,0.2612) (2.45,0.2564) (2.5,0.2624) (2.55,0.2442) (2.6,0.2444) (2.65,0.2348) (2.7,0.2226) (2.75,0.198) (2.8,0.217) (2.85,0.195) (2.9,0.189) (2.95,0.178) (3.,0.1744) (3.05,0.1594) (3.1,0.1662) (3.15,0.1514) (3.2,0.1534) (3.25,0.1388) (3.3,0.143) (3.35,0.1268) (3.4,0.1184) (3.45,0.1078) (3.5,0.1028) (3.55,0.0912) (3.6,0.0922) (3.65,0.085) (3.7,0.077) (3.75,0.0734) (3.8,0.0624) (3.85,0.0584) (3.9,0.0382) (3.95,0.0198)
}
;
\addplot [
draw=red, thick
] table {
x y
0. 0
 0.05 0.03857
 0.1 0.06663
 0.15 0.09076
 0.2 0.1124
 0.25 0.1321
 0.3 0.1504
 0.35 0.1674
 0.4 0.1835
 0.45 0.1987
 0.5 0.2132
 0.55 0.2269
 0.6 0.2401
 0.65 0.2528
 0.7 0.2649
 0.75 0.2767
 0.8 0.2880
 0.85 0.2990
 0.9 0.3096
 0.95 0.3199
 1. 0.3299
 1.05 0.3397
 1.1 0.3492
 1.15 0.3585
 1.2 0.3676
 1.25 0.3765
 1.3 0.3852
 1.35 0.3938
 1.4 0.4022
 1.45 0.4104
 1.5 0.4185
 1.55 0.4265
 1.6 0.4344
 1.65 0.4421
 1.7 0.4498
 1.75 0.4574
 1.8 0.4649
 1.85 0.4723
 1.9 0.4797
 1.95 0.4870
 2. 0.4942
 2.05 0.4103
 2.1 0.3789
 2.15 0.3559
 2.2 0.3372
 2.25 0.3211
 2.3 0.3069
 2.35 0.2940
 2.4 0.2822
 2.45 0.2713
 2.5 0.2610
 2.55 0.2513
 2.6 0.2421
 2.65 0.2334
 2.7 0.2250
 2.75 0.2168
 2.8 0.2090
 2.85 0.2014
 2.9 0.1940
 2.95 0.1867
 3. 0.1796
 3.05 0.1727
 3.1 0.1658
 3.15 0.1590
 3.2 0.1523
 3.25 0.1456
 3.3 0.1390
 3.35 0.1324
 3.4 0.1257
 3.45 0.1190
 3.5 0.1122
 3.55 0.1052
 3.6 0.09816
 3.65 0.09086
 3.7 0.08325
 3.75 0.07522
 3.8 0.06661
 3.85 0.05712
 3.9 0.04618
 3.95 0.03234
 4. 0
 };
\end{axis}
\end{tikzpicture}\end{center}
\end{minipage}
\begin{minipage}{0.45\textwidth}\begin{center}\hspace{-3.6em}(a)\end{center}\end{minipage}
\begin{minipage}{0.3\textwidth}\begin{center}(b)\end{center}\end{minipage}\caption{(a)~Histogram (\textit{blue}) constructed from  $10^5$   simulated 3-step planar uniform random walks, in comparison with  Kluyver's probability density function  (\textit{red}). Note that $ p_3(1^-)$ and $ p_3(1^+)$ both diverge. (b)~Analog of panel \textit{a} for 4-step planar uniform random walks. Note that the ``shark fin'' density function $ p_4(x)$ does not have a finite slope at the origin.}\label{fig:p3p4}\end{figure}
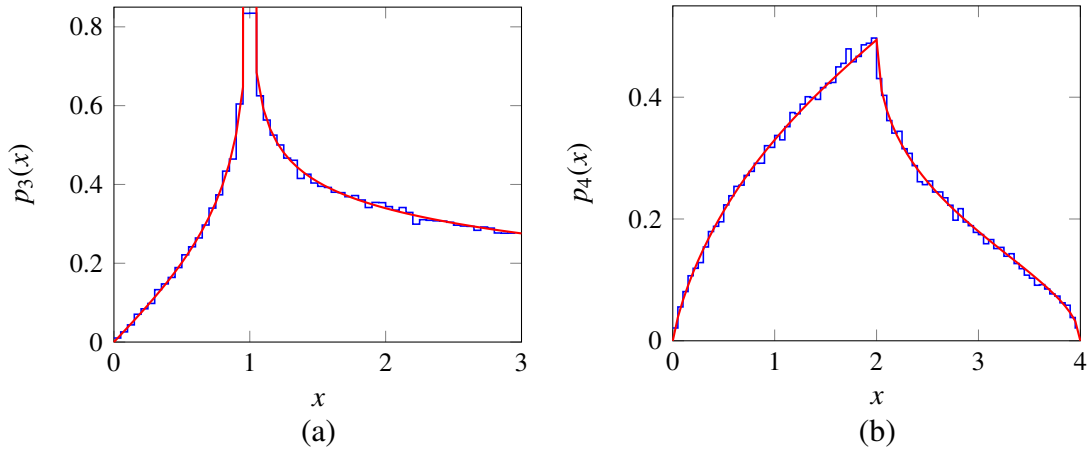

The 4-step density function $ p_4(x)$ does not admit a Maclaurin expansion, but has asymptotic behavior $ O(x\log x)$ for $ x\to0^+$  \cite[Theorem 4.4]{BSWZ2012}. To look at this through the lens of Bessel moments, we assemble the following formula from \cite[Propositions 3.1.2 and 5.1.4]{Zhou2017WEF}:\begin{align}\begin{split}
p_{4}(x):={}&\int_0^\infty J_0(xt)[J_0(t)]^4xt\D t\\={}&\frac{6}{\pi^4}\int_0^\infty I_0(xt)[K_0(t)]^4xt\D t+\frac{24}{\pi^4}\int_0^\infty K_0(xt)I_0(t)[K_0(t)]^3xt\D t, \end{split}
\end{align}where $x\in(0,2) $.  One can verify the integral identity above  by showing that both Kluyver's integral representation and the linear combination of Feynman diagrams satisfy the same homogeneous Picard--Fuchs differential equation, along with the same logarithmic asymptotic behavior as $ x\to0^+$. (An alternative derivation, based on Wick rotations and applications of Jordan's lemma, is left to interested readers.) We can recover the full asymptotic expansion of $ p_4(x)$ in \cite[Theorem 4.4]{BSWZ2012} via (generalized) power series  of $ I_0(xt)$ and $K_0(xt) $ around the origin.

The 3-step density function $ p_3(x)$ has a well-established Maclaurin expansion \cite[(3.2)]{BSWZ2012}, with strictly positive Taylor coefficients:
\begin{align}
p_{3}(x)=\frac{2x}{\pi\sqrt{3}}\sum^\infty_{k=0}\left[ \sum^k_{j=0} {k\choose j}^2{2j\choose j}\right]\left( \frac{x}{3} \right)^{2k},\quad 0\leq x<1
\end{align} where $ {n\choose k}=\frac{n!}{k!(n-k)!}$. Here, the same combinatorial coefficients also show up in the study of Bessel moments \cite[(23)--(24)]{Bailey2008}: \begin{align}
s_{3,2k+1}:=\int_0^\infty I_{0}(t)[K_{0}(t)]^{2}t^{2k+1}\D t=\frac{\pi}{3\sqrt{3}}\left( \frac{2^kk!}{3^{k}} \right)^2\sum^k_{j=0} {k\choose j}^2{2j\choose j}.
\end{align}This is not accidental. The underlying mechanism is the following integral identity \cite[Lemma 4.1.1]{Zhou2017WEF}:\begin{align}
p_{3}(x):={}&\int_0^\infty J_0(xt)[J_0(t)]^3xt\D t=\frac{6}{\pi^{2}}\int_0^\infty I_{0}(xt)I_{0}(t)[K_{0}(t)]^{2}xt\D t,\quad 0\leq x<1,\label{eq:p3_IKM}
\end{align}which is provable by Wick rotation. In this aspect,  $ p_3(x)$ and $ p_5(x)$ have something in common [cf.~\eqref{eq:p5_IKM} and \eqref{eq:p3_IKM}]. However, unlike the Maclaurin expansion of $ p_5(x)$, the leading Taylor coefficient for $ p_3(x)$ is not predominantly large. Moreover, $ p_5(1)$ is finite while $ p_3(1^-)$ diverges. These effects, when compounded, make it impossible for $ p_3(x),0\leq x<1$ to exhibit anything close to approximate linearity, as in the pronounced Pearson--Fettis phenomenon for $ p_5(x),0\leq x\leq 1$. \section{Some results for 6-step, 7-step and 8-step uniform random walks\label{sec:6_8}}
A general question for Kluyver's $ n$-step  density function $ p_n(x)$ is its  asymptotic expansion around the origin. For $ n>5$, not much is understood about the number theory behind these density functions, which contrasts with  well-established Rayleigh's approximations $ p_n(x)\sim\frac{2x}{n}e^{-x^2/n}$ \cite{Rayleigh1905} for $ n\to\infty$ (see Fig.~\ref{fig:p6p7p8}). In this section, we give an account for the latest progress on the leading asymptotic behavior for  $ p_6(x)$  and $ p_8(x)$, as well as the Maclaurin expansion for $ p_7(x)$.
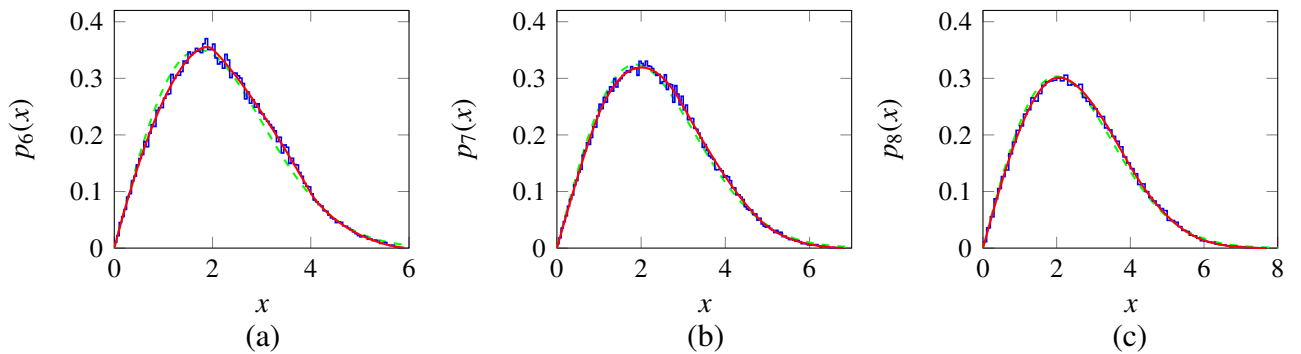
\begin{figure}[b]\begin{minipage}{0.33\textwidth}\begin{center}
\begin{tikzpicture}
\pgfplotsset{width=5.5cm,tick label style={font=\footnotesize},label style={font=\small}}
\begin{axis}[xlabel={$x$},ylabel={$ p_6(x)$},ymin=0,ymax=0.42,xmin=0,xmax=6,enlargelimits=false]
\addplot[green, dashed,thick,domain=0:6,samples=101] {(x/3)*exp(-x*x/6)};\addplot [
const plot,
draw=blue, semithick
] coordinates {(0.,0.0104) (0.05,0.022) (0.1,0.0452) (0.15,0.0556) (0.2,0.0684) (0.25,0.0902) (0.3,0.0962) (0.35,0.1186) (0.4,0.1272) (0.45,0.147) (0.5,0.1584) (0.55,0.1652) (0.6,0.1882) (0.65,0.179) (0.7,0.203) (0.75,0.2182) (0.8,0.2148) (0.85,0.2434) (0.9,0.2482) (0.95,0.2482) (1.,0.2648) (1.05,0.2714) (1.1,0.2728) (1.15,0.307) (1.2,0.2998) (1.25,0.3054) (1.3,0.3054) (1.35,0.312) (1.4,0.3306) (1.45,0.327) (1.5,0.3466) (1.55,0.342) (1.6,0.3458) (1.65,0.353) (1.7,0.3478) (1.75,0.3454) (1.8,0.3612) (1.85,0.3702) (1.9,0.3506) (1.95,0.3506) (2.,0.3606) (2.05,0.336) (2.1,0.3252) (2.15,0.329) (2.2,0.3178) (2.25,0.3424) (2.3,0.3324) (2.35,0.3012) (2.4,0.3086) (2.45,0.3094) (2.5,0.3046) (2.55,0.3004) (2.6,0.291) (2.65,0.2636) (2.7,0.2758) (2.75,0.256) (2.8,0.2622) (2.85,0.249) (2.9,0.255) (2.95,0.239) (3.,0.237) (3.05,0.2282) (3.1,0.2242) (3.15,0.2184) (3.2,0.2134) (3.25,0.2072) (3.3,0.1882) (3.35,0.196) (3.4,0.168) (3.45,0.1824) (3.5,0.1782) (3.55,0.1496) (3.6,0.1588) (3.65,0.1478) (3.7,0.147) (3.75,0.1336) (3.8,0.127) (3.85,0.1144) (3.9,0.1094) (3.95,0.1082) (4.,0.096) (4.05,0.085) (4.1,0.0818) (4.15,0.077) (4.2,0.0744) (4.25,0.0662) (4.3,0.0652) (4.35,0.058) (4.4,0.0534) (4.45,0.0514) (4.5,0.045) (4.55,0.0462) (4.6,0.0456) (4.65,0.0394) (4.7,0.0386) (4.75,0.035) (4.8,0.032) (4.85,0.0286) (4.9,0.0254) (4.95,0.022) (5.,0.0198) (5.05,0.0218) (5.1,0.0194) (5.15,0.0156) (5.2,0.016) (5.25,0.0134) (5.3,0.0142) (5.35,0.012) (5.4,0.0082) (5.45,0.0084) (5.5,0.01) (5.55,0.0058) (5.6,0.0044) (5.65,0.0052) (5.7,0.0024) (5.75,0.0022) (5.8,0.0008) (5.85,0.0006) (5.9,0.0006) }
;
\addplot [
draw=red, thick
] table {
x y
0. 0
 0.05 0.01664
 0.1 0.03288
 0.15 0.04873
 0.2 0.06420
 0.25 0.07924
 0.3 0.09396
 0.35 0.1083
 0.4 0.1222
 0.45 0.1358
 0.5 0.1490
 0.55 0.1619
 0.6 0.1743
 0.65 0.1864
 0.7 0.1982
 0.75 0.2096
 0.8 0.2206
 0.85 0.2312
 0.9 0.2415
 0.95 0.2514
 1. 0.2608
 1.05 0.2700
 1.1 0.2784
 1.15 0.2870
 1.2 0.2950
 1.25 0.3025
 1.3 0.3096
 1.35 0.3163
 1.4 0.3225
 1.45 0.3283
 1.5 0.3337
 1.55 0.3385
 1.6 0.3428
 1.65 0.3465
 1.7 0.3496
 1.75 0.3522
 1.8 0.3540
 1.85 0.3551
 1.9 0.3551
 1.95 0.3544
 2. 0.3519
 2.05 0.3474
 2.1 0.3428
 2.15 0.3381
 2.2 0.3332
 2.25 0.3282
 2.3 0.3230
 2.35 0.3177
 2.4 0.3123
 2.45 0.3068
 2.5 0.3012
 2.55 0.2954
 2.6 0.2896
 2.65 0.2836
 2.7 0.2776
 2.75 0.2715
 2.8 0.2655
 2.85 0.2590
 2.9 0.2527
 2.95 0.2462
 3. 0.2397
 3.05 0.2332
 3.1 0.2265
 3.15 0.2198
 3.2 0.2130
 3.25 0.2062
 3.3 0.1993
 3.35 0.1924
 3.4 0.1854
 3.45 0.1783
 3.5 0.1712
 3.55 0.1640
 3.6 0.1568
 3.65 0.1496
 3.7 0.1424
 3.75 0.1349
 3.8 0.1275
 3.85 0.1201
 3.9 0.1126
 3.95 0.1050
 4. 0.09744
 4.05 0.09082
 4.1 0.08500
 4.15 0.07967
 4.2 0.07472
 4.25 0.07009
 4.3 0.06573
 4.35 0.06163
 4.4 0.05775
 4.45 0.05406
 4.5 0.05056
 4.55 0.04724
 4.6 0.04411
 4.65 0.04107
 4.7 0.03820
 4.75 0.03546
 4.8 0.03286
 4.85 0.03038
 4.9 0.02801
 4.95 0.02575
 5. 0.02327
 5.05 0.02156
 5.1 0.01962
 5.15 0.01777
 5.2 0.01602
 5.25 0.01436
 5.3 0.01278
 5.35 0.01130
 5.4 0.009898
 5.45 0.008583
 5.5 0.007352
 5.55 0.006205
 5.6 0.005140
 5.65 0.004160
 5.7 0.003264
 5.75 0.002458
 5.8 0.001738
 5.85 0.001117
 5.9 0.0006013

  };
\end{axis}
\end{tikzpicture}\end{center}
\end{minipage}\begin{minipage}{0.33\textwidth}\begin{center}\begin{tikzpicture}
\pgfplotsset{width=5.5cm,tick label style={font=\footnotesize},label style={font=\small}}
\begin{axis}[xlabel={$x$},ylabel={$ p_7(x)$},ymin=0,ymax=0.42,xmin=0,xmax=7,enlargelimits=false]
\addplot[dashed,green,thick,domain=0:7,samples=101] {(2*x/7)*exp(-x*x/7)};\addplot [
const plot,
draw=blue, semithick
] coordinates {(0.,0.0078) (0.05,0.0182) (0.1,0.0354) (0.15,0.0468) (0.2,0.057) (0.25,0.0742) (0.3,0.0862) (0.35,0.0938) (0.4,0.1116) (0.45,0.1194) (0.5,0.134) (0.55,0.139) (0.6,0.1642) (0.65,0.172) (0.7,0.1864) (0.75,0.1832) (0.8,0.1954) (0.85,0.2142) (0.9,0.2138) (0.95,0.2336) (1.,0.2544) (1.05,0.2464) (1.1,0.2664) (1.15,0.2578) (1.2,0.2756) (1.25,0.2852) (1.3,0.2774) (1.35,0.2996) (1.4,0.2846) (1.45,0.2956) (1.5,0.2994) (1.55,0.305) (1.6,0.3102) (1.65,0.3136) (1.7,0.311) (1.75,0.3208) (1.8,0.3166) (1.85,0.3182) (1.9,0.3052) (1.95,0.3302) (2.,0.3264) (2.05,0.3224) (2.1,0.3304) (2.15,0.3218) (2.2,0.321) (2.25,0.3174) (2.3,0.3108) (2.35,0.3052) (2.4,0.307) (2.45,0.291) (2.5,0.2942) (2.55,0.3064) (2.6,0.2802) (2.65,0.29) (2.7,0.2886) (2.75,0.2576) (2.8,0.2872) (2.85,0.2722) (2.9,0.2532) (2.95,0.2684) (3.,0.2442) (3.05,0.2528) (3.1,0.2268) (3.15,0.2292) (3.2,0.229) (3.25,0.2116) (3.3,0.21) (3.35,0.2032) (3.4,0.1944) (3.45,0.1778) (3.5,0.183) (3.55,0.1726) (3.6,0.1628) (3.65,0.1596) (3.7,0.1568) (3.75,0.1386) (3.8,0.1384) (3.85,0.1396) (3.9,0.1366) (3.95,0.1252) (4.,0.1278) (4.05,0.126) (4.1,0.1192) (4.15,0.1066) (4.2,0.1064) (4.25,0.094) (4.3,0.0942) (4.35,0.0902) (4.4,0.086) (4.45,0.0732) (4.5,0.069) (4.55,0.0668) (4.6,0.0674) (4.65,0.0592) (4.7,0.0604) (4.75,0.0494) (4.8,0.0528) (4.85,0.0426) (4.9,0.0388) (4.95,0.037) (5.,0.0388) (5.05,0.0348) (5.1,0.0318) (5.15,0.0274) (5.2,0.0314) (5.25,0.0264) (5.3,0.028) (5.35,0.0202) (5.4,0.0212) (5.45,0.0178) (5.5,0.015) (5.55,0.0156) (5.6,0.0144) (5.65,0.0124) (5.7,0.0136) (5.75,0.0114) (5.8,0.0078) (5.85,0.0086) (5.9,0.0072) (5.95,0.0056) (6.,0.0056) (6.05,0.0054) (6.1,0.0062) (6.15,0.0062) (6.2,0.0034) (6.25,0.002) (6.3,0.0034) (6.35,0.0024) (6.4,0.0014) (6.45,0.001) (6.5,0.0006) (6.55,0.0014) (6.6,0.0006) (6.65,0.0004) (6.7,0.0002) (6.75,0.) (6.8,0.0004) }
;
\addplot [
draw=red, thick
] table {
x y
0. 0
 0.05 0.01304
 0.1 0.02607
 0.15 0.03904
 0.2 0.05201
 0.25 0.06490
 0.3 0.07760
 0.35 0.09042
 0.4 0.1031
 0.45 0.1155
 0.5 0.1279
 0.55 0.1401
 0.6 0.1521
 0.65 0.1639
 0.7 0.1755
 0.75 0.1867
 0.8 0.1979
 0.85 0.2083
 0.9 0.2186
 0.95 0.2284
 1. 0.2377
 1.05 0.2463
 1.1 0.2542
 1.15 0.2615
 1.2 0.2684
 1.25 0.2748
 1.3 0.2808
 1.35 0.2862
 1.4 0.2912
 1.45 0.2957
 1.5 0.2999
 1.55 0.3036
 1.6 0.3069
 1.65 0.3098
 1.7 0.3123
 1.75 0.3144
 1.8 0.3161
 1.85 0.3175
 1.9 0.3184
 1.95 0.3190
 2. 0.3192
 2.05 0.3191
 2.1 0.3186
 2.15 0.3177
 2.2 0.3165
 2.25 0.3149
 2.3 0.3130
 2.35 0.3107
 2.4 0.3081
 2.45 0.3052
 2.5 0.3019
 2.55 0.2982
 2.6 0.2943
 2.65 0.2899
 2.7 0.2852
 2.75 0.2804
 2.8 0.2751
 2.85 0.2694
 2.9 0.2635
 2.95 0.2572
 3. 0.2506
 3.05 0.2438
 3.1 0.2371
 3.15 0.2305
 3.2 0.2238
 3.25 0.2173
 3.3 0.2108
 3.35 0.2043
 3.4 0.1979
 3.45 0.1915
 3.5 0.1851
 3.55 0.1790
 3.6 0.1729
 3.65 0.1669
 3.7 0.1608
 3.75 0.1549
 3.8 0.1490
 3.85 0.1433
 3.9 0.1376
 3.95 0.1320
 4. 0.1264
 4.05 0.1210
 4.1 0.1156
 4.15 0.1104
 4.2 0.1052
 4.25 0.1001
 4.3 0.09514
 4.35 0.09027
 4.4 0.08549
 4.45 0.08083
 4.5 0.07630
 4.55 0.07185
 4.6 0.06753
 4.65 0.06334
 4.7 0.05928
 4.75 0.05536
 4.8 0.05158
 4.85 0.04796
 4.9 0.04450
 4.95 0.04124
 5. 0.03819
 5.05 0.03544
 5.1 0.03283
 5.15 0.03045
 5.2 0.02824
 5.25 0.02628
 5.3 0.02434
 5.35 0.02252
 5.4 0.02081
 5.45 0.01920
 5.5 0.01769
 5.55 0.01627
 5.6 0.01492
 5.65 0.01366
 5.7 0.01248
 5.75 0.01137
 5.8 0.01032
 5.85 0.009345
 5.9 0.008430
 5.95 0.007575
 6. 0.006777
 6.05 0.006034
 6.1 0.005341
 6.15 0.004705
 6.2 0.004114
 6.25 0.003570
 6.3 0.003071
 6.35 0.002616
 6.4 0.002201
 6.45 0.001828
 6.5 0.001492
 6.55 0.001195
 6.6 0.0009329
 6.65 0.0007061
 6.7 0.0005128
 6.75 0.0003521
 6.8 0.0002228
   };
\end{axis}
\end{tikzpicture}\end{center}
\end{minipage}\begin{minipage}{0.33\textwidth}\begin{center}
\begin{tikzpicture}
\pgfplotsset{width=5.5cm,tick label style={font=\footnotesize},label style={font=\small}}
\begin{axis}[xlabel={$x$},ylabel={$ p_8(x)$},ymin=0,ymax=0.42,xmin=0,xmax=8,enlargelimits=false]
\addplot[dashed,green,thick,domain=0:8,samples=101] {(x/4)*exp(-x*x/8)};\addplot [
const plot,
draw=blue, semithick
] coordinates {
(0.,0.0112) (0.1,0.0358) (0.2,0.0552) (0.3,0.0872) (0.4,0.1053) (0.5,0.1264) (0.6,0.138) (0.7,0.1666) (0.8,0.1873) (0.9,0.2081) (1.,0.2159) (1.1,0.2368) (1.2,0.2445) (1.3,0.2654) (1.4,0.2597) (1.5,0.273) (1.6,0.2875) (1.7,0.2939) (1.8,0.2953) (1.9,0.2965) (2.,0.3004) (2.1,0.2956) (2.2,0.3056) (2.3,0.288) (2.4,0.2894) (2.5,0.2868) (2.6,0.2894) (2.7,0.2661) (2.8,0.2593) (2.9,0.2536) (3.,0.2462) (3.1,0.2322) (3.2,0.2326) (3.3,0.2122) (3.4,0.209) (3.5,0.1975) (3.6,0.1889) (3.7,0.179) (3.8,0.1611) (3.9,0.1504) (4.,0.1405) (4.1,0.1314) (4.2,0.1123) (4.3,0.1135) (4.4,0.0998) (4.5,0.0893) (4.6,0.0833) (4.7,0.0761) (4.8,0.0699) (4.9,0.0663) (5.,0.049) (5.1,0.0488) (5.2,0.0457) (5.3,0.038) (5.4,0.0312) (5.5,0.0293) (5.6,0.0227) (5.7,0.0216) (5.8,0.0173) (5.9,0.013) (6.,0.012) (6.1,0.0106) (6.2,0.0102) (6.3,0.0069) (6.4,0.006) (6.5,0.0054) (6.6,0.0041) (6.7,0.0024) (6.8,0.0031) (6.9,0.0028) (7.,0.0024) (7.1,0.0008) (7.2,0.0003) (7.3,0.0007) (7.4,0.0002) (7.5,0.0001) (7.6,0.) (7.7,0.0001) }
;
\addplot [
draw=red, thick
] table {
x y
0. 0
 0.1 0.02372
 0.2 0.04718
 0.3 0.07020
 0.4 0.09284
 0.5 0.1144
 0.6 0.1354
 0.7 0.1554
 0.8 0.1744
 0.9 0.1925
 1. 0.2094
 1.1 0.2250
 1.2 0.2394
 1.3 0.2525
 1.4 0.2641
 1.5 0.2743
 1.6 0.2830
 1.7 0.2901
 1.8 0.2955
 1.9 0.2992
 2. 0.3011
 2.1 0.3014
 2.2 0.3002
 2.3 0.2977
 2.4 0.2940
 2.5 0.2892
 2.6 0.2835
 2.7 0.2770
 2.8 0.2696
 2.9 0.2615
 3. 0.2527
 3.1 0.2433
 3.2 0.2334
 3.3 0.2231
 3.4 0.2128
 3.5 0.2014
 3.6 0.1901
 3.7 0.1787
 3.8 0.1671
 3.9 0.1557
 4. 0.1445
 4.1 0.1336
 4.2 0.1232
 4.3 0.1133
 4.4 0.1035
 4.5 0.09491
 4.6 0.08640
 4.7 0.07836
 4.8 0.07078
 4.9 0.06365
 5. 0.05693
 5.1 0.05071
 5.2 0.04490
 5.3 0.03953
 5.4 0.03458
 5.5 0.03005
 5.6 0.02594
 5.7 0.02225
 5.8 0.01896
 5.9 0.01608
 6. 0.01360
 6.1 0.01150
 6.2 0.009676
 6.3 0.008099
 6.4 0.006732
 6.5 0.005548
 6.6 0.004528
 6.7 0.003652
 6.8 0.002904
 6.9 0.002272
 7. 0.001742
 7.1 0.001304
 7.2 0.0009463
 7.3 0.0006608
 7.4 0.0004385
 7.5 0.0002713
 7.6 0.0001517
 7.7 0.00007218
  };
\end{axis}
\end{tikzpicture}\end{center}
\end{minipage}
\begin{minipage}{0.35\textwidth}\begin{center}\hspace{-3.2em}(a)\end{center}\end{minipage}
\begin{minipage}{0.2\textwidth}\begin{center}\hspace{1.4em}(b)\end{center}\end{minipage}\begin{minipage}{0.3\textwidth}\begin{center}\hspace{7.4em}(c)\end{center}\end{minipage}\caption{Simulated histograms (\textit{blue}),    Kluyver's probability density functions  (\textit{red}), and Rayleigh's approximations $p_n(x)\sim \frac{2x}{n}e^{-x^2/n}$ (\textit{green}, \textit{dashed}) for $n\in\{6,7,8\}$.}\label{fig:p6p7p8}\end{figure}

Let   $ \Gamma(s)=\int_0^\infty t^{s-1}e^{-t}\D t, \R s>0$ be Euler's gamma function, with analytic continuations to all  $ s\in\mathbb C\smallsetminus\mathbb Z_{\leq0}$. In what follows, we introduce the  $L$-function associated with a cusp form $f$ via a Mellin transform:  \begin{align}
L(f,s):=\frac{(2\pi )^{s}}{\Gamma(s)}\int_0^{\infty} f(iy)y^{s-1}\D y.
\end{align}
Define the  Dedekind eta function as $ \eta(z):=e^{\pi iz/12}\prod_{n=1}^\infty(1-e^{2\pi inz})$ for complex numbers $z$ satisfying $ \I z>0$. We will be interested in the following three special cusp forms:\begin{align}
f_{3,15}(z)={}&[\eta(3z)\eta(5z)]^3+[\eta(z)\eta(15z)]^3,\\f_{4,6}(z)={}&[\eta(z)\eta(2z)\eta(3z)\eta(6z)]^{2},\\f_{6,6}(z)={}&
\frac{ [\eta (2 z) \eta (3 z)]^{9}}{[\eta (z)\eta (6 z)]^{3}}+\frac{ [\eta ( z) \eta (6 z)]^{9}}{[\eta (2z)\eta (3 z)]^{3}},\end{align}
where $f_{w,\ell} $ denotes a modular form of weight $w$ and level $\ell$.

At the time of writing (Aug.\ 2017), the derivatives $ p_5'(0^+)$, $ p'_6(0^+)$ and $ p'_8(0^+)$ are known to be representable by certain critical $L$-values. Here, a special $L$-value $L(f_{w,\ell},s)$ is said to be \textit{critical}, if $ s\in\mathbb Z\cap(0,w)$. We recapitulate these non-trivial evaluations from recent literature in the theorem below.\begin{theorem}[$p'_n(0^{+})$ and critical $L$-values]\label{thm:pn_L}We have \begin{align}
p_5'(0^{+})={}&\frac{6}{\pi^2}L(f_{3,15},1)=\frac{3 \sqrt{15}}{\pi ^3}L(f_{3,15},2),\label{eq:p5'_L}\\p'_6(0^{+})={}&\frac{15}{\pi^2}L(f_{4,6},1)=\frac{45}{\pi ^4}L(f_{4,6},3),\label{eq:p6'_L}\\p'_8(0^{+})={}&\frac{35}{9 \pi ^2}L(f_{6,6},1)=\frac{20}{\pi ^4}L(f_{6,6},3)=\frac{210}{\pi ^6}L(f_{6,6},5).\label{eq:p8'_L}
\end{align}\end{theorem}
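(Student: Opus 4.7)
The starting point is the identity $p_n'(0^{+})=\int_0^\infty [J_0(t)]^n\,t\D t$, obtained by differentiating Kluyver's representation \eqref{eq:Kluyver_pn} at $x=0^+$ and pulling the derivative under the integral sign (justified by the same dominated-convergence argument that we used to prove Theorem~\ref{thm:p5_Taylor}). My plan is to Wick-rotate each of these three real Bessel moments into $\mathbb Q$-linear combinations of moments $\int_0^\infty [I_0(t)]^a[K_0(t)]^{n-a}\,t\D t$ in 2-dimensional quantum field theory, and then to invoke the Broadhurst-type evaluations of the latter that were established in \cite[\S\S4--5]{Zhou2017WEF}.

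Concretely, for each $n\in\{5,6,8\}$ the plan is to write $J_0=\tfrac12[H_0^{(1)}+H_0^{(2)}]$, to expand $[J_0(t)]^n$ binomially and to deform each resulting contour $\int_0^\infty$ into $\int_0^{\pm i\infty}$, using the asymptotics of $H_0^{(1)},H_0^{(2)}$ together with Jordan's lemma to guarantee convergence. The cross terms involving $Y_0$ are eliminated by subtracting off integer linear combinations of vanishing identities of the form $\int_{i0^\pm -\infty}^{i0^\pm +\infty}[J_0(t)]^{a}[H_0^{(1)}(t)]^{n-a}t\D t=0$ (for appropriate $a$ with $a<n/2+1$, so that the closing contours obey Jordan's lemma), exactly as in the algebraic identity \eqref{eq:Y_cancel} of Lemma~\ref{lm:p5_alt_int}. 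For $n=5$ this procedure reproduces Lemma~\ref{lm:p5_alt_int} at $x=0^+$, yielding $p_5'(0^+)=\frac{30}{\pi^4}\int_0^\infty I_0(t)[K_0(t)]^4 t\D t$. For $n=6$ and $n=8$ the analogous algebra will output expressions of the shape $c_1\int_0^\infty I_0(t)^{a_1}K_0(t)^{n-a_1}t\D t+\cdots$ with rational coefficients $c_j$.

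Once these Feynman-diagram representations are in hand, the modular content enters through results already proved by the author in \cite{Zhou2017WEF}: the 3-loop sunrise moment is a critical $L$-value of the weight-$3$ level-$15$ form $f_{3,15}$, the 4-loop (banana) moments are critical $L$-values of the weight-$4$ level-$6$ form $f_{4,6}$, and the 6-loop banana moments are critical $L$-values of the weight-$6$ level-$6$ form $f_{6,6}$. Substituting these evaluations gives the first of each of the equalities in \eqref{eq:p5'_L}--\eqref{eq:p8'_L}; the additional equalities (between $L$-values at different critical integers $s\in\mathbb Z\cap(0,w)$) come for free from the modular functional equation $L(f_{w,\ell},s)=\varepsilon\,\ell^{w/2-s}(2\pi)^{2s-w}\Gamma(w-s)/\Gamma(s)\cdot L(f_{w,\ell},w-s)$ applied at $s=1$ versus $s=w-1$ (and for $f_{6,6}$ also $s=w/2=3$).

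The step I expect to be the genuine obstacle is the clean identification of the Bessel-moment representation for $n=6$ and $n=8$: for odd $n$ the fact that Wick rotation produces a single monomial in $I_0$ and $K_0$ was a minor miracle of the parity cancellation in \eqref{eq:Y_cancel}, whereas for even $n$ the analogous bookkeeping yields genuine $\mathbb Q$-linear combinations of several moments, and one must match coefficients so that the surviving combination is exactly the one appearing in Broadhurst's identities proved in \cite[\S\S4--5]{Zhou2017WEF}. Once this matching is executed, the remainder is a direct substitution; the numerical rational prefactors $\tfrac{6}{\pi^2},\tfrac{15}{\pi^2},\tfrac{35}{9\pi^2}$ in \eqref{eq:p5'_L}--\eqref{eq:p8'_L} will then emerge as a bookkeeping of the Wick-rotation coefficients times the modular normalization.
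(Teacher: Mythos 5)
There is a genuine gap, concentrated in the even-step cases \eqref{eq:p6'_L} and \eqref{eq:p8'_L}. Your plan is to Wick-rotate $\int_0^\infty[J_0(t)]^nt\D t$ directly for $n=6,8$, eliminating the $Y_0$ cross terms ``exactly as in \eqref{eq:Y_cancel}''. This cannot work, for a parity reason that the paper itself spells out in \S\ref{sec:6_8}: the only contour identities available for even $n$ pair an \emph{even} power $J^a$ ($a\leq n/2$) with the real combination $c_{n-a}=(J+iY)^{n-a}+(J-iY)^{n-a}$ (the odd-$a$ combinations that survive reflection are odd in $Y$ and so cannot contribute to $J^n$), and these span too small a subspace. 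Concretely, for $n=6$ the Wick-rotatable real building blocks are only $c_6$ and $J^2c_4$, a $2$-dimensional subspace of the $4$-dimensional space spanned by $J^6,J^4Y^2,J^2Y^4,Y^6$; since $c_6$ contains $Y^6$ and $J^2c_4$ does not contain $J^2Y^4$-free combinations, $J^6$ is not in their span, so no $\mathbb Q$-linear combination of $\int_0^\infty [I_0(t)]^a[K_0(t)]^{6-a}t\D t$ arises this way (the same count fails for $n=8$ with $c_8,J^2c_6,J^4c_4$ inside a $5$-dimensional space). The idea you are missing is elementary but essential: $p_{n+1}'(0^+)=\int_0^\infty[J_0(t)]^{n+1}t\D t=p_n(1)$, so the even cases reduce to the \emph{endpoint values} $p_5(1)$ and $p_7(1)$ of the odd-step Feynman-diagram representations \eqref{eq:p5_IKM} and \eqref{eq:p7_IKM}; there the external leg $J_0(xt)$ at $x=1$ supplies the extra factor of $J$, and one lands on $\frac{30}{\pi^4}\int_0^\infty[I_0]^2[K_0]^4t\D t$ and on the two-term combination \eqref{eq:p8'_Wick}, which are exactly the moments evaluated as critical $L$-values in \cite[\S\S4--5]{Zhou2017WEF}.

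Two smaller points. For \eqref{eq:p5'_L} your route (Wick rotation to the $3$-loop sunrise, then the Bloch--Kerr--Vanhove/Samart evaluation) is legitimate and essentially equivalent to the paper's, which instead cites Rogers--Wan--Zucker for $L(f_{3,15},2)=r_{5,0}\pi^3/(3\sqrt{15})$ directly. But your claim that the equality $\frac{20}{\pi^4}L(f_{6,6},3)=\frac{210}{\pi^6}L(f_{6,6},5)$ ``comes for free from the modular functional equation'' is wrong: for weight $6$ the functional equation is symmetric about $s=3$ and relates $L(f_{6,6},1)$ to $L(f_{6,6},5)$ only; the ratio $L(f_{6,6},5)/L(f_{6,6},3)=2\pi^2/21$ is a separate non-trivial identity (Broadhurst's conjectured (142), proved in \cite[\S5]{Zhou2017WEF}) that must be invoked alongside the two Bessel-moment evaluations $\int_0^\infty[I_0]^4[K_0]^4t\D t=L(f_{6,6},3)$ and $\int_0^\infty[I_0]^2[K_0]^6t\D t=\frac{27}{4}L(f_{6,6},5)$.
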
\begin{proof}
According to a result of Rogers--Wan--Zucker \cite[Theorem 5]{RogersWanZucker2015}, we have  $ L(f_{3,15},2)=\frac{r_{5,0}\pi^3}{3 \sqrt{15}}$. The relation between $ L(f_{3,15},1)$ and $ L(f_{3,15},2)$ is a consequence of the reflection formula for $ L(f_{3,15},s)$ \cite[(95)]{Broadhurst2016}. This proves \eqref{eq:p5'_L}.

 Setting $ x=1$ in \eqref{eq:p5_IKM}, we obtain\begin{align}
p'_6(0^{+})=p^{\phantom'}_5(1)=\frac{30}{\pi^{4}}\int_0^\infty [I_{0}(t)]^{2}[K_0(t)]^4 t\D t.\label{eq:p6'_IKM}
\end{align}The corresponding Feynman diagram\begin{align}
\;\;\;\;\;
\dia{\put(-50,0){\line(-1,-1){50}}
\put(-50,0){\line(-1,1){50}}
\put(50,0){\line(1,1){50}}
\put(50,0){\line(1,-1){50}}
%\put(-100,45){\begin{rotate}{-45}{$^1$}\end{rotate}}
%\put(-80,-75){\begin{rotate}{45}{$^{1 }$}\end{rotate}}
\put(0,0){\circle{100}}
\qbezier(-50, 0)(0, 35)(50, 0)
\qbezier(-50, 0)(0, -35)(50, 0)
\put(50,0){\vtx}
\put(-50,0){\vtx}
%\put(90,35){\begin{rotate}{45}{$^{1}$}\end{rotate}}
%\put(65,-60){\begin{rotate}{-45}{$^{1}$}\end{rotate}}
%\put(-8,45){$^1$}
%\put(-8,10){$^{1 }$}
%\put(-8,-45){$^{1 }$}
%\put(-8,-80){$^{1 }$}
}{}\;\;=2^{3}\int_0^\infty [I_{0}(t)]^2[K_0(t)]^4 t\D t
\end{align} is equal to $4\pi^2 L(f_{4,6},1)=12L(f_{4,6},3)$, as conjectured by Broadhurst \cite[(110)]{Broadhurst2016} and verified in our recent work \cite[Theorem 4.2.3]{Zhou2017WEF}.
Thus, \eqref{eq:p6'_L} is true. [One can also represent $ p'_6(0^{+})$ using generalized hypergeometric series, based on a recently verified conjecture  \cite[(1.12)]{Zhou2018LaportaSunrise} of  Laporta \cite[(27)]{Laporta:2017okg} and Broadhurst (see \cite[\S2.2]{Broadhurst2017Paris}, \cite[\S2.2]{Broadhurst2017CIRM}, \cite[\S2.1]{Broadhurst2017Higgs}, \cite[\S3.1]{Broadhurst2017ESIa}, \cite[\S3.1]{Broadhurst2017DESY}).]

In \cite[Lemma 5.1.2]{Zhou2017WEF}, we have used Wick rotations to show that (see also Theorem \ref{thm:p7_Taylor} below) \begin{align}
\frac{p'_8(0^{+})}{35}=\frac{4}{\pi^6}\int_{0}^\infty[I_0(t)]^2[K_0(t)]^6t\D t-\frac{2}{\pi^4}\int_{0}^\infty[I_0(t)]^4[K_0(t)]^4t\D t.\label{eq:p8'_Wick}
\end{align}Meanwhile, the following conjectures of Broadhurst \cite[(142), (143),  (145)]{Broadhurst2016}\begin{align}
\frac{L(f_{6,6},5)}{L(f_{6,6},3)}={}&\frac{2\pi^2}{21},\\\int_{0}^\infty[I_0(t)]^4[K_0(t)]^4t\D t={}&L(f_{6,6},3),\\\int_{0}^\infty[I_0(t)]^2[K_0(t)]^6t\D t={}&\frac{27}{4}L(f_{6,6},5)
\end{align}have been confirmed in \cite[Theorems 5.1.1, 5.2.1,  5.2.2]{Zhou2017WEF}. With a reflection formula for $ L(f_{3,15},s)$ \cite[(138)]{Broadhurst2016} that relates $ L(f_{3,15},1)$ to $ L(f_{3,15},5)$, we conclude the proof of \eqref{eq:p8'_L}.\end{proof}

Here, the cusp forms $ f_{3,15}$, $ f_{4,6}$ and $ f_{6,6}$ occurring in the modular $L$-functions are not arbitrary: they arise from solutions to the corresponding Kloosterman problems (``Bessel moments over finite fields''), which have been investigated systematically by Broadhurst \cite[\S\S2--6]{Broadhurst2016}. Computations over finite fields determine local factors in the Hasse--Weil zeta functions, which piece together into the modular $L$-functions, namely, $ L(f_{3,15},s)$ for the 5-Bessel problem, $ L(f_{4,6},s)$ for the 6-Bessel problem, and $ L(f_{6,6},s)$ for the 8-Bessel problem. Numerical studies of these Kloosterman moments had enabled Broadhurst to discover many closed-form evaluations of individual Feynman diagrams \cite[\S7]{Broadhurst2016}, before their formal proofs were found \cite[\S\S3--5]{Zhou2017WEF}.

The Hasse--Weil zeta functions in Broadhurst's construction happen to result in modular $L$-functions when there are 5, 6, or 8 Bessel factors in the integrand.
For generic $n$, the Hasse--Weil $L$-function in Broadhurst's theory may not be modular, but still appears to be (as supported by strong numerical evidence \cite{Broadhurst2016,Broadhurst2017Paris,Broadhurst2017CIRM,Broadhurst2017Higgs}) good mathematical models for Feynman diagrams in 2-dimensional quantum field theory.

At present, we are unable to find further applications of Broadhurst's predictions to $ p_6(x)$ and $p_8(x) $, beyond their leading order asymptotic behavior. The major difficulty seems to reside in certain obstructions to implementing contour deformations.

Concretely speaking, for generic $x$, the  6-step density function $p_6(x)$ does not appear to be related to recognizable objects in 2-dimensional quantum field theory, at least not in any fashion that resembles $ p_3(x)$, $p_4(x)$ or $p_5(x)$. As $ p'_6(0^{+})$ is finite, the function $ p_6(x)$ differs qualitatively from $ p_4(x)=-\frac{3x}{2\pi^{2}}\log x+O(x),x\to0^+$ \cite[(4.4)]{BSWZ2012}.
Even though \eqref{eq:p6'_IKM} holds, numerical computations reveal that one cannot equate $ p_6(x)$ with \begin{align}
\frac{30}{\pi^{4}}\int_0^\infty I_{0}(xt)[I_{0}(t)]^{2}[K_0(t)]^4x t\D t
\end{align}for $ x>0$, contrary to the situations in  $ p_3(x)$ and $p_5(x)$.
While Wick rotation still
brings us [cf.~\eqref{eq:IIKKKK_Wick'}]\begin{align}
\left(\frac2\pi
\right)^4\int_0^\infty I_0(xt)[I_{0}(t)]^{2}[K_0(t)]^4 t\D t=-\int_{0}^{\infty}J_0(xt)J^{2}(J^{4}-6J^{2}Y^{2}+Y^{4}) t\D t
\end{align}for $ x\in[0,2]$ and $ J=J_0(t),Y=Y_0(t)$, we can no longer cancel out all the $Y$ factors in the integrand, as done in the proof of Lemma \ref{lm:p5_alt_int}. Unlike  \eqref{eq:Jordan_2}, we cannot close the contour upwards in the integral\begin{align}
\int_{i0^+-\infty}^{i0^++\infty}J_0(xz)[J_{0}(z)]^{3}[H_0^{(1)}(z)]^3z\D z
\end{align}when $x>0$, due to lack of exponential decay in the integrand as $ |z|\to\infty$, hence inapplicability of Jordan's lemma. One encounters a similar hurdle for $ p_8(x)$.

The story for the 7-step density function $ p_7(x)$  is quite different. On the analytic side, we have good news, as $ p_7(x)$ admits a convergent Maclaurin expansion, whose Taylor coefficients are all expressible as Bessel moments. On the arithmetic side, we have bad news, as none of these Bessel moments associated with $ p_7(x)$ are currently known to be related to special $L$-values.

\begin{theorem}[Taylor expansion for $ p_7(x)$]\label{thm:p7_Taylor}For $ x\in[0,1]$, we have \begin{align}
\frac{p_{7}(x)}{35}=\frac{4}{\pi^6}\int_{0}^\infty I_{0} (xt)I_0(t)[K_0(t)]^6t\D t-\frac{2}{\pi^4}\int_{0}^\infty I_{0} (xt)[I_0(t)]^3[K_0(t)]^4t\D  t.\label{eq:p7_IKM}
\end{align}Setting \begin{align}
r_{7,k}=\frac{140}{4^{k}(k!)^{2}\pi^6}\int_{0}^\infty \left\{I_0(t)[K_0(t)]^6-\frac{\pi^2}{2}[I_0(t)]^3[K_0(t)]^4\right\}t^{2k+1}\D  t
\end{align}for $ k\in\mathbb Z_{\geq0}$, we have a Maclaurin series\begin{align}
\sum^\infty_{k=0}r_{7,k}x^{2k+1}
\end{align}that converges uniformly to $ p_7(x)$ for $x\in[0,1]$. \end{theorem}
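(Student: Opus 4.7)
The plan is to adapt the Wick-rotation argument from Lemma \ref{lm:p5_alt_int} to the seven-Bessel setting and then pass to a Maclaurin series by termwise integration.

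First I would establish two Wick rotations via Jordan's lemma on a quarter-circle contour in the upper half-plane. The integrand $J_0(xz)J_0(z)[H_0^{(1)}(z)]^6 z$ has dominant exponential factor $e^{(5-x)iz}$ as $|z|\to\infty$, so for $x\in[0,5]$,
\[
\int_0^\infty J_0(xt)J_0(t)[H_0^{(1)}(t)]^6\,t\,\D t = \bigl(\tfrac{2}{\pi}\bigr)^6 \int_0^\infty I_0(xt)I_0(t)[K_0(t)]^6\,t\,\D t.
\]
The integrand $J_0(xz)[J_0(z)]^3[H_0^{(1)}(z)]^4 z$ has dominant factor $e^{(1-x)iz}$, giving for $x\in[0,1)$
\[
\int_0^\infty J_0(xt)[J_0(t)]^3[H_0^{(1)}(t)]^4\,t\,\D t = -\bigl(\tfrac{2}{\pi}\bigr)^4 \int_0^\infty I_0(xt)[I_0(t)]^3[K_0(t)]^4\,t\,\D t,
\]
which extends to $x=1$ by joint continuity (dominated convergence with envelope $[I_0(t)]^4[K_0(t)]^4\,t$ on the right, absolute integrability of the real-axis integrand on the left). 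Taking real parts expresses each right-hand side as $\int_0^\infty J_0(xt)\cdot P(J,Y)\cdot t\,\D t$ for a specific real polynomial in $J=J_0(t),Y=Y_0(t)$.

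Next I would invoke two vanishing identities valid for $x\in[0,1]$, obtained by closing $H^a := [H_0^{(1)}]^a$ upward and its conjugate $\overline{H^a}$ downward:
\[
\int_0^\infty J_0(xt)\,\R\{H^7\}\,t\,\D t = 0, \qquad \int_0^\infty J_0(xt)\,J^2\,\R\{H^5\}\,t\,\D t = 0.
\]
The algebraic linchpin is the polynomial identity
\[
J^7 = -\tfrac{5}{16}\R(H^7) + \tfrac{35}{16}\,J\,\R(H^6) - \tfrac{21}{4}\,J^2\,\R(H^5) + \tfrac{35}{8}\,J^3\,\R(H^4),
\]
which I would verify by expanding $\R(H^n)=\sum_k(-1)^k\binom{n}{2k}J^{n-2k}Y^{2k}$ and matching the coefficients of $J^7,\,J^5 Y^2,\,J^3 Y^4,\,J Y^6$ in a $4\times 4$ linear system. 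Integrating this identity against $J_0(xt)\cdot xt\,\D t$, the $\R(H^7)$ and $J^2\R(H^5)$ terms drop out, while the $J\R(H^6)$ and $J^3\R(H^4)$ terms, via the two Wick-rotated formulas, contribute exactly $\tfrac{4}{\pi^6}\int_0^\infty I_0(xt)I_0(t)[K_0(t)]^6\,xt\,\D t-\tfrac{2}{\pi^4}\int_0^\infty I_0(xt)[I_0(t)]^3[K_0(t)]^4\,xt\,\D t$, establishing \eqref{eq:p7_IKM}.

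For the Maclaurin statement, I would substitute $I_0(xt)=\sum_{k\ge 0}(xt)^{2k}/[4^k(k!)^2]$ into each of the two modified-Bessel integrals and exchange sum and integral by monotone convergence on $[0,1]$ (non-negative summands, partial sums increasing pointwise to $I_0(xt)$). The termwise coefficients are precisely the stated $r_{7,k}$. Uniform convergence on $[0,1]$ then follows from Dini's theorem applied separately to each of the two positive, monotone sub-series---each converging pointwise on the compact $[0,1]$ to a continuous limit---and then subtracting. The principal obstacle is the borderline $x=1$ for the second Wick rotation, where Jordan's lemma loses its exponential margin, coupled with the absence of sign-definiteness of the individual $r_{7,k}$; both difficulties are circumvented by first establishing the integral identity strictly inside $[0,1)$, promoting to $x=1$ by continuity, and then running Dini's argument on the sign-definite splitting rather than on the combined series.
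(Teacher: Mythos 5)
Your proposal is correct and follows essentially the same route as the paper: Wick rotations of the two modified-Bessel integrals, Jordan's-lemma vanishing of the $\R(H^7)$ and $J^2\R(H^5)$ integrals, and a linear-algebra identity expressing $J^7$ in the basis $\{J^k\R(H^{7-k})\}_{k=0}^3$ (your identity is exactly the paper's $\tfrac{16J^7}{35}=-\tfrac{c_7}{14}+\tfrac{Jc_6}{2}-\tfrac{6J^2c_5}{5}+J^3c_4$ from \S\ref{sec:pn_Mac}, rescaled), the paper merely packaging the same algebra as an elimination of the $Y$-terms between \eqref{eq:p7_Wick1'} and \eqref{eq:p7_Wick2'}. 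Your treatment of the Maclaurin series---monotone convergence termwise and Dini's theorem applied separately to the two sign-definite sub-series, since the combined $r_{7,k}$ are not of one sign---is precisely the ``routine generalization of \S\ref{sec:Maclaurin}'' the paper invokes and spells out in Theorem \ref{thm:pn_Mac}.
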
\begin{proof}For $ x\in[0,1]$, direct applications of Wick rotations leave us\begin{align}
\left(\frac{2}{\pi}\right)^6\int_{0}^\infty I_{0} (xt)I_0(t)[K_0(t)]^6t\D t={}&\int_{0}^\infty J_{0} (xt)J(J^6-15 J^4 Y^2+15 J^2 Y^4-Y^6)t\D t,\label{eq:p7_Wick1}\\\left(\frac{2}{\pi}\right)^4\int_{0}^\infty I_{0} (xt)[I_0(t)]^{3}[K_0(t)]^4t\D t={}&-\int_{0}^\infty J_{0} (xt)J^{3}(J^4-6 J^2 Y^2+Y^4)t\D t.\label{eq:p7_Wick2}
\end{align}In parallel to \eqref{eq:Y_cancel}, the following computations{\allowdisplaybreaks\begin{align}\begin{split}&
J(J^6-15 J^4 Y^2+15 J^2 Y^4-Y^6)\\{}&-\frac{(J+i Y)^7-(-J+i Y)^7}{14} -J^2 [(J+i Y)^5-(-J+i Y)^5]\\={}&-\frac{8}{7}  J^5 (J^2-7 Y^2),\end{split}\\\begin{split}&J^{3}(J^4-6 J^2 Y^2+Y^4)-\frac{J^{2}}{10}[(J+i Y)^5-(-J+i Y)^5]\\={}&\frac{4}{5} J^5 (J^2-5 Y^2)\end{split}
\end{align}}allow us to reduce \eqref{eq:p7_Wick1} and \eqref{eq:p7_Wick2} into\begin{align}
\left(\frac{2}{\pi}\right)^6\int_{0}^\infty I_{0} (xt)I_0(t)[K_0(t)]^6t\D t={}&-\frac{8}{7}  \int_{0}^\infty J_{0} (xt)J^5 (J^2-7 Y^2)t\D t,\label{eq:p7_Wick1'}\\\left(\frac{2}{\pi}\right)^4\int_{0}^\infty I_{0} (xt)[I_0(t)]^{3}[K_0(t)]^4t\D t={}&-\frac{4}{5}\int_{0}^\infty J_{0} (xt)J^5 (J^2-5 Y^2)t\D t,\label{eq:p7_Wick2'}
\end{align}by virtue of Jordan's lemma. Eliminating the $Y$ terms from \eqref{eq:p7_Wick1'} and \eqref{eq:p7_Wick2'}, we arrive at \eqref{eq:p7_IKM}, which also incorporates the integral representation for  $p_8'(0^{+})=p_7^{\phantom'}(1) $ in~\eqref{eq:p8'_Wick} as a special case.

The rest can be verified by routine generalizations of the arguments in \S\ref{sec:Maclaurin}. \end{proof}We wrap up this section with two comments on the last theorem.  First, one can verify numerically that $ r_{7,0}>0$ and $ r_{7,1}<0$, so the non-negativity of Taylor coefficients for $ p_3(x)$  and $ p_5(x)$ no longer persists in $ p_7(x)$. Second, among Feynman diagrams involving 7 Bessel factors, only $ \int_0^\infty [I_0(t)]^2[K_0(t)]^5t\D t$ is known (numerically) to be expressible \cite[(129)]{Broadhurst2016} via a special value of a Hasse--Weil $L$-function (associated with a Hecke eigenform  of weight 3 and level 525  \cite[\S5.2]{Broadhurst2016}), so  the arithmetic nature of \begin{align} r_{7,0}^{\phantom'}=p_7'(0^{+})=p_6^{\phantom'}(1)=\frac{140}{\pi^6}\int_{0}^\infty \left\{I_0(t)[K_0(t)]^6-\frac{\pi^2}{2}[I_0(t)]^3[K_0(t)]^4\right\}t\D  t\end{align} remains an open question.
\section{Maclaurin expansions for $ p_{2j+1}(x)$ where $j\in\mathbb Z_{>0}$\label{sec:pn_Mac}}
So far, we have seen that the Taylor series for $ p_3(x)$, $ p_5(x)$ and $p_7(x)$ on  $[0,1)$  can be derived from their associated Feynman diagrams, in \eqref{eq:p3_IKM}, \eqref{eq:p5_IKM} and \eqref{eq:p7_IKM}, respectively. The derivations for these alternative integral representations of Kluyver's probability densities can be streamlined by the following algebraic identities:\begin{align}\frac{2J^{3}}{3}={}&-\frac{c_{3}}{6}+\frac{Jc_{2}}{2},\\-\frac{8 J^5}{15} ={}&-\frac{c_5}{10}+\frac{Jc_4}{2}-\frac{2J^{2}c_3}{3},\\
\frac{16 J^7}{35}={}&-\frac{c_{7}}{14}+\frac{Jc_6}{2}-\frac{6J^2c_5}{5}+J^{3}c_4,
\end{align}where $ c_\ell=(J+iY)^\ell+(J-iY)^\ell$. By Wick rotation, we have\begin{align}
\int_0^\infty I_0(xt)[I_0(t)]^{2m+1}\left[\frac{2K_{0}(t)}{\pi}\right]^{2(j-m)}t\D t=\frac{(-1)^{j-m+1}}{2}\int_0^\infty J_0(xt)J^{2m+1}c_{2(j-m)}t\D t
\label{eq:Bj_odd}\end{align} for $ 0\leq x\leq 1$, when $ m\in\mathbb Z\cap[0,(j-1)/2]$ for $ j\in\mathbb Z_{>1}$. By closing the contour upwards, we have \begin{align}\begin{split}
0={}&\int_{i0^+-\infty}^{i0^++\infty}J_0(xz)[J_{0}(z)]^{2m'}[H_0^{(1)}(z)]^{2(j-m')+1}z\D z\\={}&\int_0^\infty J_0(xt)J^{2m'}c_{2(j-m')+1}t\D t,\end{split}\label{eq:Bj_even}
\end{align}for $ 0\leq x\leq 1$, when $ m'\in\mathbb Z\cap[0,j/2]$ for $ j\in\mathbb Z_{>1}$.  (When $ j=1$, the conditions for the two equations above
need to be modified into $ 0\leq x<1$.)

Generalizing further, we arrive at the following theorem.

\begin{theorem}[$ p_{2j+1}(x)$ as Feynman diagrams]For each $ j\in\mathbb Z_{>1}$, the function $p_{2j+1}(x),0\leq x\leq 1 $ is a unique $ \mathbb Q$-linear combination of \begin{align}
\int_0^\infty I_0(xt)[I_0(t)]^{2m+1}\left[\frac{K_{0}(t)}{\pi}\right]^{2(j-m)}xt\D t,\quad\text{where } m\in\mathbb Z\cap[0,(j-1)/2].\label{eq:pn_IKM}
\end{align} (When $ j=1$, the same is true for  $ 0\leq x<1$.)\end{theorem}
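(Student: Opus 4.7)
The plan is to reduce the representation problem to a purely algebraic identity in $J := J_0(t)$ and $Y := Y_0(t)$, and then integrate termwise via the Wick rotation \eqref{eq:Bj_odd} and the Jordan's lemma vanishing \eqref{eq:Bj_even}. This mirrors what was done for $j=2$ in Lemma~\ref{lm:p5_alt_int} and for $j=3$ in Theorem~\ref{thm:p7_Taylor}; the new task is to produce a single algebraic identity that works uniformly in $j$.

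For the algebraic step, write $H = J+iY$, so that $H\bar H = J^2+Y^2$ and $c_\ell = H^\ell + \bar H^\ell$. Pairing terms $k\leftrightarrow 2j+1-k$ in the binomial expansion $(2J)^{2j+1} = (H+\bar H)^{2j+1}$ yields
\begin{equation*}
J^{2j+1} = \frac{1}{2^{2j+1}} \sum_{k=0}^{j} \binom{2j+1}{k} (J^2+Y^2)^k c_{2j+1-2k}.
\end{equation*}
Iterating the three-term recurrence $(J^2+Y^2)c_\ell = 2Jc_{\ell+1} - c_{\ell+2}$ (which is immediate from $H\bar H c_\ell = (H+\bar H)c_{\ell+1} - c_{\ell+2}$) eliminates every power of $J^2+Y^2$, rewriting $J^{2j+1}$ as a $\mathbb Q$-linear combination of the monomials $J^{2m+1}c_{2(j-m)}$ for $m \in \mathbb Z\cap[0,(j-1)/2]$ together with $J^{2m'}c_{2(j-m')+1}$ for $m'\in\mathbb Z\cap[0,j/2]$. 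The cases $j=1,2,3$ of this procedure reproduce exactly the three identities displayed just before the theorem statement.

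Substituting this identity into Kluyver's integral $p_{2j+1}(x) = \int_0^\infty J_0(xt) J^{2j+1} xt\,\D t$ and separating by parity of the $c$-index, the vanishing \eqref{eq:Bj_even} annihilates every $J^{2m'}c_{2(j-m')+1}$ contribution, while \eqref{eq:Bj_odd} converts every $J^{2m+1}c_{2(j-m)}$ contribution into the Bessel moment appearing in \eqref{eq:pn_IKM}. For $j>1$ the extension to the closed endpoint $x=1$ is legitimate because the worst-case integrand $[I_0(t)]^{j+1}[K_0(t)]^{j+1}t$ decays as $t^{-j}$ at infinity and is thus integrable; for $j=1$ one must restrict to $x<1$, in accordance with \eqref{eq:p3_IKM}.

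Uniqueness reduces to $\mathbb Q$-linear independence of the Bessel moments in \eqref{eq:pn_IKM} as functions of $x$. A hypothetical relation, valid on $[0,1]$, propagates to all Maclaurin coefficients by the dominated-convergence argument of Theorem~\ref{thm:p5_Taylor}, reducing the problem to linear independence of the moment sequences $k \mapsto \int_0^\infty [I_0(t)]^{2m+1}[K_0(t)]^{2(j-m)} t^{2k+1}\,\D t$ indexed by $m \in \mathbb Z\cap[0,(j-1)/2]$. The pointwise ratios $[I_0/K_0]^{2m}\sim (e^{2t}/\pi)^{2m}$ show that the weights $[I_0(t)]^{2m+1}[K_0(t)]^{2(j-m)}$ have distinct exponential growth rates and so are themselves linearly independent; I expect the passage from this to linear independence of the integrated moment sequences to be the main technical obstacle, and plan to resolve it by verifying directly that the $(j+1)\times(j+1)$ coefficient matrix arising in the algebraic identity of the second paragraph is invertible, which then transports uniqueness through the integration.
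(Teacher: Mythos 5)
Your existence argument is sound and in fact more constructive than the paper's: the binomial pairing $(2J)^{2j+1}=\sum_{k=0}^{j}\binom{2j+1}{k}(J^2+Y^2)^k c_{2j+1-2k}$, combined with the recurrence $(J^2+Y^2)c_\ell=2Jc_{\ell+1}-c_{\ell+2}$, does produce an explicit $\mathbb Q$-expansion of $J^{2j+1}$ over the monomials $J^kc_{2j+1-k}$, $0\le k\le j$, after which \eqref{eq:Bj_odd} and \eqref{eq:Bj_even} finish the job exactly as in the paper. The paper instead argues non-constructively that $B_j=\{c_{2j+1},Jc_{2j},\dots,J^jc_{j+1}\}$ is a basis of the $(j+1)$-dimensional span of $\{J^{2j+1},J^{2j-1}Y^2,\dots,JY^{2j}\}$ via a $\deg_Y$ computation plus a dimension count; your route buys explicit rational coefficients, and your integrability check at $x=1$ for $j>1$ is correct.

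The gap is in uniqueness. Uniqueness of the representation of $p_{2j+1}$ is equivalent to the $\mathbb Q$-linear independence of the \emph{functions of $x$} listed in \eqref{eq:pn_IKM}, and your proposed resolution --- invertibility of the $(j+1)\times(j+1)$ coefficient matrix of the algebraic identity --- cannot deliver this. That matrix only says that the representation of the \emph{polynomial} $J^{2j+1}$ in the basis $B_j$ is unique; it says nothing about whether a nonzero element of the span of $\{J^{2m+1}c_{2(j-m)}\}_m$ can integrate against $J_0(xt)\,t$ to the zero function of $x$ (and the integration map is manifestly far from injective, since every odd-subscript term $J^{2m'}c_{2(j-m')+1}$ integrates to zero by \eqref{eq:Bj_even}). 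Indeed the paper's logic runs in the opposite direction from yours: it \emph{assumes} the linear independence of the functions in \eqref{eq:pn_IKM}, imported from Wro\'nskian computations in \cite{Zhou2017BMdet,Zhou2018ExpoDESY}, and uses that to conclude $\Span B_j^{\mathrm e}\cap\Span B_j^{\mathrm o}=\{0\}$, i.e.\ to establish the basis property you take as your starting point. Your remark that the weights $[I_0(t)]^{2m+1}[K_0(t)]^{2(j-m)}$ have distinct exponential decay rates shows they are independent as functions of $t$, but, as you yourself anticipate, this does not transfer to independence of the moment sequences without a further argument (e.g.\ a Mellin/Carlson-type theorem on functions whose odd moments all vanish, or the cited Wro\'nskian analysis); the matrix invertibility supplies none of this, so the uniqueness claim remains unproven in your proposal.
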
\begin{proof}First, the functions  listed in   \eqref{eq:pn_IKM} are linearly independent over $ C^\infty(0,1)$, as can be verified by a Wro\'nskian computation (see \cite[\S4]{Zhou2017BMdet} or \cite[\S2.2 and \S4]{Zhou2018ExpoDESY}).

Second, we show that the function $ J^{2j+1}=[J_0(z)]^{2j+1}$ is always a unique $ \mathbb Q$-linear combination of the following set with  $ (j+1)$ members\begin{align}B_j:=
\{c_{2j+1},Jc_{2j},\dots,J^jc_{j+1}\}\equiv B_j^{\mathrm e}\cup B_j^{\mathrm o},
\end{align}where $ B_j^{\mathrm e}$ (resp.~$ B_j^{\mathrm o}$) denotes a subset with even (resp.~odd) powers for $J$ and odd (resp.~even) subscripts for $c$. Clearly, each member in the set $ B_j$ resides in a $ (j+1)$-dimensional  $ \mathbb Q$-vector space spanned by $ \{J^{2j+1},J^{2j-1}Y^2,\dots,JY^{2j}\}$.  We only need to verify that $ B_j$ indeed forms a basis of this vector space, \textit{i.e.}~all the members in  $ B_j$ are linearly independent. An easy way to see this is to compute polynomial degree (in the variable $Y$)\begin{align}\begin{split}\deg_{Y}(J^kc_{2j+1-k})={}&
\deg_{Y}\{J^k[(J+iY)^{2j+1-k}+(J-iY)^{2j+1-k}]\}\\={}&\begin{cases}2j-k, & k\text{ even}, \\
2j+1-k , & k\text{ odd}, \\
\end{cases}\end{split}
\end{align}which immediately reveals both  $ B_j^{\mathrm e}$ and $ B_j^{\mathrm o}$ as linearly independent sets on their own.    Moreover,  we have $\Span B_j^{\mathrm e}\cap\Span B_j^{\mathrm o}=\{0\}$, according to  \eqref{eq:Bj_odd}, \eqref{eq:Bj_even} and the linear independence of the functions  listed in   \eqref{eq:pn_IKM}. Therefore, the dimension of the linear space spanned by  $ B_j$ is  $\dim\Span B_j=\dim \Span B_j^{\mathrm e}+\dim\Span B_j^{\mathrm o}= j+1$, as claimed.

% Third,  the unique    linear representation  of $ J^{2j+1}$  with respect to  the basis set $B_j $   is in fact a  $ \mathbb Q$-linear combination, which can be shown algorithmically (say, by Gau{\ss}--Jordan elimination).

Last, but not the least, depending on the parity of $k\in\mathbb Z\cap[0,j]$, the expression   $\int _{0}^{\infty}J_0(xt)J^kc_{2j+1-k}t\D t$ is either an integer multiple of  a Feynman diagram listed in   \eqref{eq:pn_IKM}, or a vanishing integral. This completes the proof of the alternative integral representation for  $p_{2j+1}(x),0\leq x\leq 1 $  as a linear combination of Feynman diagrams, which generalizes  \eqref{eq:p3_IKM}, \eqref{eq:p5_IKM} and \eqref{eq:p7_IKM}.  \end{proof}

The theorem above has some interesting consequences.
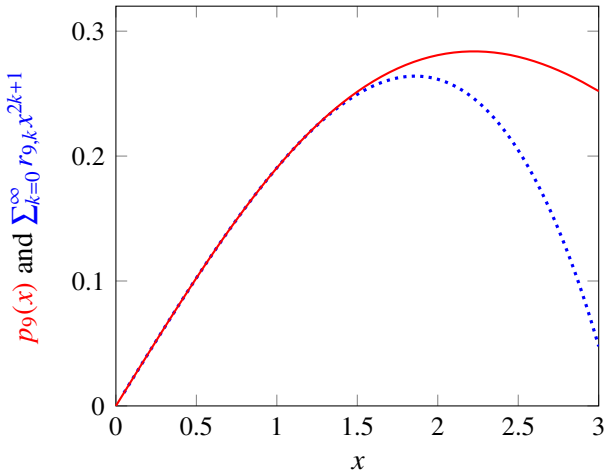
\begin{figure}[b]\begin{minipage}{0.5\textwidth}\begin{center}\begin{tikzpicture}
\pgfplotsset{width=8cm,tick label style={font=\footnotesize},label style={font=\small}}
\begin{axis}[xlabel={$x$},ylabel={$\textcolor{red}{ p_9(x)}$ and \textcolor{blue}{$ \sum^\infty_{k=0}r_{9,k}x^{2k+1} $}},ymin=0,ymax=.32,xmin=0,xmax=3,enlargelimits=false]
\addplot [
smooth, dotted,
draw=blue, very thick
] table {
x y0. 0.
 0.05 0.0104652
 0.1 0.0209165
 0.15 0.03134
 0.2 0.0417217
 0.25 0.0520478
 0.3 0.0623042
 0.35 0.0724769
 0.4 0.0825517
 0.45 0.0925146
 0.5 0.102351
 0.55 0.112047
 0.6 0.121588
 0.65 0.13096
 0.7 0.140147
 0.75 0.149135
 0.8 0.157909
 0.85 0.166454
 0.9 0.174755
 0.95 0.182796
 1. 0.190562
 1.05 0.198036
 1.1 0.205202
 1.15 0.212045
 1.2 0.218548
 1.25 0.224694
 1.3 0.230465
 1.35 0.235844
 1.4 0.240814
 1.45 0.245356
 1.5 0.249452
 1.55 0.253082
 1.6 0.256229
 1.65 0.258871
 1.7 0.260989
 1.75 0.262562
 1.8 0.263568
 1.85 0.263986
 1.9 0.263793
 1.95 0.262967
 2. 0.261483
 2.05 0.259318
 2.1 0.256444
 2.15 0.252838
 2.2 0.24847
 2.25 0.243314
 2.3 0.23734
 2.35 0.230518
 2.4 0.222816
 2.45 0.214202
 2.5 0.20464
 2.55 0.194095
 2.6 0.182528
 2.65 0.169899
 2.7 0.156166
 2.75 0.141283
 2.8 0.125202
 2.85 0.107869
 2.9 0.0892269
 2.95 0.0692111
 3. 0.0477459

 }
;
\addplot [
draw=red, thick
] table {
x y
0. 0.
 0.05 0.0104655
 0.1 0.020917
 0.15 0.0313407
 0.2 0.0417224
 0.25 0.0520482
 0.3 0.0623038
 0.35 0.0724755
 0.4 0.0825496
 0.45 0.0925126
 0.5 0.10235
 0.55 0.112048
 0.6 0.12159
 0.65 0.130978
 0.7 0.140162
 0.75 0.149137
 0.8 0.15791
 0.85 0.166454
 0.9 0.174754
 0.95 0.182794
 1. 0.190553
 1.05 0.198036
 1.1 0.205217
 1.15 0.212096
 1.2 0.218669
 1.25 0.224932
 1.3 0.230882
 1.35 0.236517
 1.4 0.241836
 1.45 0.246835
 1.5 0.251518
 1.55 0.255879
 1.6 0.259918
 1.65 0.263643
 1.7 0.267048
 1.75 0.270133
 1.8 0.272898
 1.85 0.275346
 1.9 0.277479
 1.95 0.279298
 2. 0.280806
 2.05 0.282005
 2.1 0.282899
 2.15 0.28349
 2.2 0.283782
 2.25 0.28378
 2.3 0.283487
 2.35 0.282909
 2.4 0.282049
 2.45 0.280958
 2.5 0.279502
 2.55 0.277823
 2.6 0.275884
 2.65 0.273692
 2.7 0.271256
 2.75 0.268583
 2.8 0.265683
 2.85 0.262566
 2.9 0.25924
 2.95 0.255587
 3. 0.251996
  };

\end{axis}
\end{tikzpicture}\end{center}\end{minipage}\begin{minipage}{0.5\textwidth}
\caption{ Kluyver's 9-step density function $ p_9(x)$ (\textit{red solid}  curve) and the  Maclaurin series $ \sum^\infty_{k=0}r_{9,k}x^{2k+1}=\frac{630}{\pi^{8}}\int_0^\infty I_0(xt)I_{0}(t)[K_0(t)]^8 xt\D t-\frac{840}{\pi^{6}}\int_0^\infty I_0(xt)[I_{0}(t)]^{3}[K_0(t)]^6 xt\D t$ (\textit{blue dotted}  curve) for $0\leq x\leq 3 $. Note that $ p_9(x)$ is not real analytic at $x=1$ \cite[Theorem 2.4]{BSWZ2012}, and it overlaps the Maclaurin series only for $ x\in[0,1]$, even though the pixelation in the graph may create an illusion that there is still agreement for  certain arguments $ x>1$.\label{fig:p9_Taylor}}\end{minipage}\end{figure}
\begin{theorem}[Taylor expansion for $ p_{2j+1}(x)$]\label{thm:pn_Mac}For each  $ j\in\mathbb Z_{>1}$, there exists a uniformly convergent Maclaurin series:\begin{align}
p_{2j+1}(x)=\sum^\infty_{k=0}r_{2j+1,k}x^{2k+1},\quad 0\leq x\leq 1.\label{eq:pn_Taylor}
\end{align}(For the special case where $j=1$, the expression $ p_3(1^-)$ diverges, and the power series $ \sum^\infty_{k=0}r_{3,k}x^{2k+1}$ converges uniformly for $ 0\leq x\leq \delta$ with $ \delta\in(0,1)$.)

The analytic continuation of the  corresponding ramble integral $ W_{2j+1}(s)=\int_0^\infty x^s p_{2j+1}(x)\D x$ has only simple poles.

Furthermore, for each $j\in\mathbb Z_{>0} $, the infinite series\begin{align}
\sum^\infty_{k=0}\frac{r_{2j+1,k}}{s+2k+2}\label{eq:abs_0}
\end{align}converges absolutely and uniformly, when $s$ is restricted to any compact subset of $\mathbb C\smallsetminus(2\mathbb Z_{<0}) $.  \end{theorem}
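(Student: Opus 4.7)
The plan is to route all three assertions through the positive auxiliary Feynman integrals
\[F_{j,m}(x):=\int_{0}^{\infty}I_{0}(xt)[I_{0}(t)]^{2m+1}\left[\frac{K_{0}(t)}{\pi}\right]^{2(j-m)}xt\,dt,\qquad m\in\mathbb{Z}\cap[0,(j-1)/2],\]
for which the previous theorem supplies a decomposition $p_{2j+1}(x)=\sum_{m}c_{m}^{(j)}F_{j,m}(x)$ on $[0,1]$ (on $[0,1)$ when $j=1$) with rational $c_{m}^{(j)}$. Plugging the entire series $I_{0}(xt)=\sum(xt/2)^{2k}/(k!)^{2}$ into each $F_{j,m}(x)$ and applying Levi's monotone convergence theorem (valid by nonnegativity of the integrand) gives
\[F_{j,m}(x)=\sum_{k=0}^{\infty}\tilde{r}^{(m)}_{2j+1,k}\,x^{2k+1},\qquad\tilde{r}^{(m)}_{2j+1,k}:=\frac{1}{4^{k}(k!)^{2}}\int_{0}^{\infty}[I_{0}(t)]^{2m+1}\left[\frac{K_{0}(t)}{\pi}\right]^{2(j-m)}t^{2k+1}\,dt\geq0,\]
so that $r_{2j+1,k}=\sum_{m}c_{m}^{(j)}\tilde{r}^{(m)}_{2j+1,k}$. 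A routine asymptotic analysis based on $I_{0}(t)\sim e^{t}/\sqrt{2\pi t}$ and $K_{0}(t)\sim\sqrt{\pi/(2t)}\,e^{-t}$ shows that the integrand of $F_{j,m}(1)$ decays at worst like $t^{-j}$; thus $F_{j,m}(1)<\infty$ for every admissible $m$ when $j>1$, while the sole $j=1$ case gives only $t^{-1}$ decay and the expected logarithmic divergence. For $j>1$, Dini's theorem applied to the monotone sequence of continuous partial sums of each $F_{j,m}$ on the compact interval $[0,1]$ yields uniform convergence, which transfers to the finite linear combination and establishes \eqref{eq:pn_Taylor}; for $j=1$ the same argument works on every compact $[0,\delta]\subset[0,1)$.

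I would then deduce the simple-pole assertion by splitting
\[W_{2j+1}(s)=\int_{0}^{1}x^{s}p_{2j+1}(x)\,dx+\int_{1}^{2j+1}x^{s}p_{2j+1}(x)\,dx.\]
The second integral is entire in $s$ (bounded integrand over a compact interval), so every pole of $W_{2j+1}$ arises from the first. For $\Re s>-2$, uniform Maclaurin convergence legalizes termwise integration:
\[\int_{0}^{1}x^{s}p_{2j+1}(x)\,dx=\sum_{k=0}^{\infty}\frac{r_{2j+1,k}}{s+2k+2}.\]
Once the right-hand side is shown to extend holomorphically to $\mathbb{C}\setminus(2\mathbb{Z}_{<0})$---which is exactly the content of \eqref{eq:abs_0}---each excluded point $s=-2k_{0}-2$ is revealed as a simple pole of $W_{2j+1}$ with residue $r_{2j+1,k_{0}}$ by separating the singular term $r_{2j+1,k_{0}}/(s+2k_{0}+2)$ from the rest of the series, which remains holomorphic on a neighborhood of $-2k_{0}-2$ by the compact-uniform convergence.

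The substantive step is therefore \eqref{eq:abs_0}, and the argument splits by $j$. When $j>1$, positivity $\tilde{r}^{(m)}_{2j+1,k}\geq 0$ and finiteness of $F_{j,m}(1)$ combine into
\[\sum_{k=0}^{\infty}|r_{2j+1,k}|\leq\sum_{m}|c_{m}^{(j)}|\sum_{k=0}^{\infty}\tilde{r}^{(m)}_{2j+1,k}=\sum_{m}|c_{m}^{(j)}|\,F_{j,m}(1)<\infty;\]
for any compact $K\subset\mathbb{C}\setminus(2\mathbb{Z}_{<0})$, the uniform lower bound $\delta:=\inf_{s\in K,\,k\geq0}|s+2k+2|>0$ then immediately yields absolute uniform convergence via $|r_{2j+1,k}/(s+2k+2)|\leq\delta^{-1}|r_{2j+1,k}|$. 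The $j=1$ case is more delicate, since $\sum|r_{3,k}|$ itself diverges; here one descends by one power through termwise integration of $F_{1,0}(x)=\sum_{k}\tilde{r}^{(0)}_{3,k}x^{2k+1}$ over $[0,1]$, yielding
\[\int_{0}^{1}F_{1,0}(x)\,dx=\sum_{k=0}^{\infty}\frac{\tilde{r}^{(0)}_{3,k}}{2k+2}<\infty\]
because the logarithmic blow-up of $F_{1,0}$ at $x=1$ is integrable; absolute uniform convergence on $K$ then follows from the comparison $|s+2k+2|^{-1}\leq C_{K}(2k+2)^{-1}$ for sufficiently large $k$, with the finitely many small-$k$ terms handled by compactness of $K$. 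Throughout, the truly delicate recurring theme is the borderline case $m=(j-1)/2$ for odd $j$, where exponential decay of the integrand degenerates to the mere polynomial rate $t^{-j}$; it is precisely this degeneration that forces the hypothesis $j>1$ in \eqref{eq:pn_Taylor} and singles out $p_{3}(x)$ as genuinely exceptional.
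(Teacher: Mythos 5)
Your proposal is correct and follows essentially the same route as the paper: decompose $p_{2j+1}$ into the fixed-sign Feynman integrals of \eqref{eq:pn_IKM}, expand $I_0(xt)$ termwise via Levi's monotone convergence theorem, invoke Dini's theorem on $[0,1]$ for uniformity, and control $\sum_k r_{2j+1,k}/(s+2k+2)$ by a Weierstra{\ss} $M$-test against the convergent series $\sum_k \bigl|r^{(m)}_{2j+1,k}\bigr|/(2k+2)=\int_0^1\bigl|\sum_k r^{(m)}_{2j+1,k}x^{2k+1}\bigr|\,\D x$. The only cosmetic differences are that you read off the simple poles directly from the partial-fraction expansion (which the paper records as \eqref{eq:W_part_frac} in the subsequent theorem, preferring a Mellin-inversion dictionary between pole order and logarithmic terms for this step) and that you treat $j>1$ and $j=1$ separately in the $M$-test, where the paper's single weighted comparison covers both at once.
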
\begin{proof}As before, the Taylor coefficient $r_{2j+1,k} $ is attributable to  Bessel moments associated with finitely many Feynman diagrams listed in   \eqref{eq:pn_IKM}. Concretely speaking,  for each  $ j\in\mathbb Z_{>1}$, we may decompose the Taylor coefficient $ r_{2j+1,k}$ into \begin{align}
r_{2j+1,k}^{\phantom{(m)}}=\sum_{m\in\mathbb  Z\cap[0,(j-1)/2]}r_{2j+1,k}^{(m)},
\end{align} where $r_{2j+1,k}^{(m)}  $ is a constant rational multiple of \begin{align}\frac{1}{4^k(k!)^2}
\int_0^\infty [I_0(t)]^{2m+1}\left[\frac{K_{0}(t)}{\pi}\right]^{2(j-m)}t^{2k+1}\D t.
\end{align} Here,  for each $ m\in\mathbb  Z\cap[0,(j-1)/2]$, the sequence $ \{r_{2j+1,k}^{(m)} |k\in\mathbb Z_{\geq0}\}$ carries a  fixed sign. Accordingly,  we can use Dini's theorem  to show that the sequence of continuous functions   $ \sum^n_{k=0}r_{2j+1,k}^{(m)}x^{2k+1}$ converges uniformly to  $ \sum^\infty_{k=0}r_{2j+1,k}^{(m)}x^{2k+1}$ on $ [0,1]$. Thus, the  Maclaurin series in \eqref{eq:pn_Taylor} is a finite sum of uniformly convergent continuous functions, converging  to $ p_n(x),0\leq x\leq 1$,  as stated.
(This does not preclude, however, the possibility that the Maclaurin series has a radius of convergence  greater than $1$, as shown in Figs.~\ref{fig:p5_Taylor} and \ref{fig:p9_Taylor}.)

We note that in the decomposition  $ W_{2j+1}(s)=\int_0^1 x^s p_{2j+1}(x)\D x+\int_1^{2j+1} x^s p_{2j+1}(x)\D x$, the second integral contributes no singularities in the complex $ s$-plane, so the pole structure for the analytic continuation of the ramble integral $ W_{2j+1}(s)$ is completely determined by $ p_{2j+1}(x),0\leq x\leq 1$ (or the asymptotic behavior in any non-void open neighborhood of the origin).
After Mellin inversion, a pole $ 1/(s-s_0)^{k+1}$ at a negative integer $s_0\in\mathbb Z_{<0}$ translates into a term $ \frac{(-1)^k}{k!}x^{-s_0}\log^k x$ in the (generalized) power series \cite[Appendix B.7]{FlajoletSedgewick2009}. Therefore,  the analytic continuation of $ W_{2j+1}(s)$ contains only simple poles. (See also \eqref{eq:W_part_frac} below.)

 For each $j\in\mathbb Z_{>0}$, by Levi's monotone convergence theorem, we have\begin{align}
\int_0^1\sum^\infty_{k=0}r_{2j+1,k}^{(m)}x^{2k+1}\D x=\sum^\infty_{k=0}\frac{r_{2j+1,k}^{(m)}}{2k+2}=\frac{r_{2j+1,0}^{(m)}}{\left|r_{2j+1,0}^{(m)}\right|}\sum^\infty_{k=0}\frac{\left|r_{2j+1,k}^{(m)}\right|}{2k+2},\label{eq:abs_conv}
\end{align}and the right-hand side of the equation above is a finite real number. For  $s$ residing in a  compact subset of   $\mathbb C\smallsetminus(2\mathbb Z_{<0})$,  the series\begin{align}
\sum^\infty_{k=0}\left\vert\frac{r_{2j+1,k}^{(m)}}{s+2k+2}\right\vert\label{eq:abs_sum_s}
\end{align}   converges uniformly, through comparison with the absolutely convergent series   \eqref{eq:abs_conv}, in a Weierstra{\ss} $M$-test. The expression in \eqref{eq:abs_0}, as a sum of finitely many  well-behaved series, also converges absolutely and compactly.   \end{proof}\begin{theorem}[A sum rule for ramble integrals]For $j\in\mathbb Z_{>0} $ and $ \nu\in\mathbb C\smallsetminus\mathbb Z$, we have \begin{align}
W_{2j+2}(\nu)=\sum_{m=0}^\infty\left[\frac{\Gamma\left(\frac{\nu}{2}+1\right)}{\Gamma(m+1)\Gamma\left(\frac{\nu}{2}-m+1\right)}\right]^2W_{2j+1}(\nu-2m ).\label{eq:W_sum}
\end{align}Moreover, the formula above extends to all $ \nu\in\mathbb C\smallsetminus(2\mathbb Z_{<0})$, by continuity. When $ \nu/2\in\mathbb Z_{\geq0}$, the series on the right-hand side truncates to a finite sum, due to poles of $ \Gamma\left(\frac{\nu}{2}-m+1\right)$. \end{theorem}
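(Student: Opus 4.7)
My plan begins with the convolution identity
\begin{align*}
W_{2j+2}(\nu)=\int_0^{2j+1} p_{2j+1}(x)\,K(x,\nu)\,\D x,\qquad K(x,\nu):=\int_0^{2\pi} |1+xe^{i\phi}|^\nu\,\frac{\D\phi}{2\pi},
\end{align*}
obtained by adjoining one uniform unit step: conditional on a $(2j+1)$-step walk ending at distance $x$, an additional step in direction $\phi$ produces new distance $\sqrt{x^2+1+2x\cos\phi}$. From the binomial expansion of $(1+xe^{i\phi})^{\nu/2}(1+xe^{-i\phi})^{\nu/2}$ together with orthogonality of $e^{i(k-l)\phi}$ on $[0,2\pi]$, the kernel $K(x,\nu)$ admits two complementary expansions: $K(x,\nu)=\sum_{m=0}^\infty\binom{\nu/2}{m}^2 x^{2m}$ on $[0,1)$ and $K(x,\nu)=\sum_{m=0}^\infty\binom{\nu/2}{m}^2 x^{\nu-2m}$ on $(1,\infty)$.

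Splitting $\int_0^{2j+1}=\int_0^1+\int_1^{2j+1}$, I would apply Fubini to the absolutely convergent series on $[1,2j+1]$ to obtain $\sum_m\binom{\nu/2}{m}^2\int_1^{2j+1}x^{\nu-2m}p_{2j+1}(x)\,\D x$, and invoke the uniformly convergent Maclaurin expansion $p_{2j+1}(x)=\sum_k r_{2j+1,k}x^{2k+1}$ from Theorem~\ref{thm:pn_Mac} on $[0,1]$ to obtain $\sum_{m,k}\binom{\nu/2}{m}^2 r_{2j+1,k}/[2(k+m+1)]$. Comparing with $\sum_m\binom{\nu/2}{m}^2 W_{2j+1}(\nu-2m)$, whose singular and entire contributions decompose respectively as $\sum_{m,k}\binom{\nu/2}{m}^2 r_{2j+1,k}/(\nu-2m+2k+2)$ and $\sum_m\binom{\nu/2}{m}^2\int_1^{2j+1}x^{\nu-2m}p_{2j+1}(x)\,\D x$, the sum rule reduces to the combinatorial identity
\begin{align*}
\sum_{m=0}^\infty\binom{\nu/2}{m}^2\frac{1}{2(k+m+1)}=\sum_{m=0}^\infty\binom{\nu/2}{m}^2\frac{1}{\nu+2k+2-2m},\qquad k\in\mathbb Z_{\ge 0}.\quad(\star)
\end{align*}

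The principal obstacle is the proof of $(\star)$. When $\nu=2N$ with $N\in\mathbb Z_{\ge 0}$, the coefficient $\binom{N}{m}$ vanishes for $m>N$, and the symmetry $\binom{N}{m}=\binom{N}{N-m}$ maps the two sides via $m\mapsto N-m$. For general complex $\nu$, both sides of $(\star)$ define holomorphic functions on the right half-plane $\R\nu>0$ (convergence via $\binom{\nu/2}{m}^2=O(m^{-\R\nu-2})$), and standard Stirling-based bounds of the form $|\binom{\nu/2}{m}|\lesssim|\nu/2|^m/m!$ show that their difference is of exponential type $1$ in $\nu$. Since the difference vanishes on the infinite discrete set $2\mathbb Z_{\ge 0}$, Carlson's theorem applied to the dilation $z\mapsto F(2z)$ (whose imaginary-axis type is at most $2<\pi$) forces identical vanishing throughout $\R\nu\ge 0$, whence meromorphic continuation extends $(\star)$ to $\nu\in\mathbb C\setminus 2\mathbb Z_{<0}$.

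The transition from the strip of absolute convergence to all $\nu\in\mathbb C\setminus\mathbb Z$, and further to $\mathbb C\setminus 2\mathbb Z_{<0}$ by continuity, follows by meromorphic continuation of both sides, supported by the uniform convergence of $\sum_k r_{2j+1,k}/(s+2k+2)$ established in Theorem~\ref{thm:pn_Mac}. When $\nu/2\in\mathbb Z_{\ge 0}$, the zeros of $1/\Gamma(\nu/2-m+1)$ inside $\binom{\nu/2}{m}$ eliminate all terms with $m>\nu/2$, truncating the right-hand series to a finite sum, as asserted.
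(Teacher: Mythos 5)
Your route is genuinely different from the paper's (which closes the Barnes-type contour in the Mellin inversion formula of Borwein--Straub--Wan rightward, using the simple-pole structure of $W_{2j+1}$ and a Stirling estimate on semicircles $\left|z-\tfrac12\right|=2k+s$), and its skeleton --- the one-step convolution kernel $K(x,\nu)$, its two hypergeometric expansions on $[0,1)$ and $(1,\infty)$, and the reduction via the partial-fraction form of $W_{2j+1}$ to the scalar identity $(\star)$ --- is sound and correctly isolates where the content lies. The identity $(\star)$ is indeed true (for $\nu=2N$ it is the symmetry $m\mapsto N-m$, and in general it amounts to a Thomae-type relation between two ${}_3F_2(1)$ series, or equivalently to the residue computation the paper performs for the single kernel $K$). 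But your proof of $(\star)$ has a genuine gap.

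The problem is the Carlson step. The bound $\bigl|\binom{\nu/2}{m}\bigr|\lesssim |\nu/2|^m/m!$ is false: for fixed $\nu$ one has $\binom{\nu/2}{m}=\frac{\Gamma(\nu/2+1)}{\Gamma(m+1)\Gamma(\nu/2-m+1)}$, and the reflection formula gives $\frac{1}{\Gamma(\nu/2-m+1)}=\frac{(-1)^m\sin(\pi\nu/2)}{\pi}\,\Gamma(m-\nu/2)$, so $\binom{\nu/2}{m}\sim \frac{(-1)^m\sin(\pi\nu/2)\Gamma(\nu/2+1)}{\pi\, m^{1+\nu/2}}$ as $m\to\infty$ --- polynomial, not factorial, decay in $m$. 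Consequently every tail term of both series in $(\star)$ carries the factor $\sin^2(\pi\nu/2)$, and on the line $\nu=iy$ one has $|\sin^2(\pi i y/2)|\,|\Gamma(1+iy/2)|^2\sim \tfrac{|y|}{4\pi}e^{\pi|y|/2}$; in the variable $z=\nu/2$ this is growth of type exactly $\pi$ on the imaginary axis, not type $2$. That is precisely the borderline case where Carlson's theorem fails: the false ``identity'' obtained by adding $\sin^2(\pi\nu/2)$ to one side also vanishes on $2\mathbb Z_{\geq0}$ and has the same type, so no conclusion can be drawn without either (i) exhibiting cancellation that reduces the type of the \emph{difference} below $\pi$ (you show nothing of the sort; the left side of $(\star)$ is bounded on $\nu\in i\mathbb R$ via $|K(x,iy)|\leq 1$, but the right side is not obviously so \emph{a priori}), or (ii) proving $(\star)$ directly, e.g.\ by a contiguous-relation/Zeilberger argument, by Thomae's transformation, or by the very contour-shift that the paper applies to $W_{2j+1}$ itself. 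Until $(\star)$ is established by such means, the proof is incomplete. (The remaining analytic-continuation step to all $\nu\in\mathbb C\smallsetminus\mathbb Z$ is handled at the same level of detail as in the paper and is acceptable.)
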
\begin{proof}We shall only prove the sum rule for  $ s=-\nu\in(1/2,1)$, as the rest follows from analytic continuation.

As pointed out by Borwein--Straub--Wan \cite[(4.2)]{BSW2013}, it would suffice to show that, in the following Mellin inversion formula\begin{align}
W_{2j+2}(-s)=\frac{\Gamma\left( 1-\frac{s}{2} \right)}{\Gamma\left( \frac{s}{2}\right)}\frac{1}{2\pi i}\int_{\frac{1}{2}-i\infty}^{\frac{1}{2}+i\infty}\frac{\Gamma\left( \frac{z}{2} \right)\Gamma\left( \frac{s-z}{2} \right)}{2\Gamma\left( 1-\frac{z}{2} \right)\Gamma\left( 1-\frac{s-z}{2} \right)}W_{2j+1}(-z)\D z,
\end{align}the contour can be closed to the right, with residue contributions only coming from the poles of $ \Gamma\left( \frac{s-z}{2} \right)$.

To justify their claims of contour deformation, we need two observations.

First, judging from Theorem \ref{thm:pn_Mac},  the only possible singularities of $W_{2j+1}(-z) $ are simple poles at $ z\in2\mathbb Z_{>0}$, the effects of which will be canceled by the corresponding zeros of $ 1/\Gamma\left( 1-\frac{z}{2} \right)$, so there are no residue contributions from the singularities of $W_{2j+1}(-z) $  to the right-hand side of  \eqref{eq:W_sum}.

Second, for  semi-circular contours  $ \gamma_r:=\left\{z\in\mathbb C\left|\R z>\frac{1}{2},\left|z-\frac{1}{2}\right|=r\right.\right\}$, we will show that \begin{align}
\varliminf_{r\to\infty}\left|\int_{\gamma_r}\frac{\Gamma\left( \frac{z}{2} \right)\Gamma\left( \frac{s-z}{2} \right)}{\Gamma\left( 1-\frac{z}{2} \right)\Gamma\left( 1-\frac{s-z}{2} \right)}W_{2j-1}(-z)\D z\right|=0.\label{eq:liminf0}
\end{align}With the uniformly convergent Taylor expansion in \eqref{eq:pn_Taylor}, we have \begin{align}\begin{split}
W_{2j+1}(\alpha)={}&\int_0^1 x^\alpha p_{2j+1}(x)\D x+\int_1^{2j+1} x^\alpha p_{2j+1}(x)\D x\\={}&\sum^\infty_{k=0}\frac{r_{2j+1,k}}{\alpha+2k+2}+\int_1^{2j+1} x^\alpha p_{2j+1}(x)\D x\end{split}
\end{align}for $  \alpha>0$. (Here, Levi's monotone convergence theorem ensures that  termwise integration is permissible.) After analytic continuation, we have a partial fraction expansion:\begin{align}
W_{2j+1}(-z)-\int_1^{2j+1} x^{-z} p_{2j+1}(x)\D x=\sum^\infty_{k=0}\frac{r_{2j+1,k}}{2k+2-z},\label{eq:W_part_frac}
\end{align} valid for all complex-valued $z$ except the positive even integers. In view of the absolute and compact convergence properties in Theorem  \ref{thm:pn_Mac}, the infinite series on the right-hand side of the equation above defines a meromorphic function for $ z\in\mathbb C\smallsetminus(2\mathbb Z_{>0})$, and is bounded by a constant when $z$ runs to infinity in a sequence of concentric circles  $\left|z-\frac{1}{2}\right|=2k+s,k\in\mathbb Z_{\geq0} $.
These asymptotic bounds allow us to prove \eqref{eq:W_sum} for each fixed $ s\in(1/2,1)$, as we choose a sequence of  semi-circles $ \gamma_r=\gamma_{2k+s}$, on which \begin{align}\begin{split}
\frac{\Gamma\left( \frac{z}{2} \right)\Gamma\left( \frac{s-z}{2} \right)}{\Gamma\left( 1-\frac{z}{2} \right)\Gamma\left( 1-\frac{s-z}{2} \right)}={}&\frac{\sin\frac{\pi z}{2}}{\sin\frac{\pi(s- z)}{2}}\left[ \frac{\Gamma\left( \frac{z}{2} \right)}{\Gamma\left( 1-\frac{s-z}{2} \right)} \right]^2\\={}&\frac{1}{\sin\frac{\pi s}{2}\cot\frac{\pi z}{2}-\cos\frac{\pi s}{2}}\left( \frac{2}{z} \right)^{2-s}\left[ 1+O\left( \frac{1}{z} \right) \right]=O\left( \frac{1}{z^{2-s}} \right)\end{split}
\end{align} is sufficient to close the contour in \eqref{eq:liminf0}  rightwards.\end{proof}Finally, we remark that  \eqref{eq:W_sum} for the situation $ j=1,\nu\in\mathbb Z$ has been singled out in \cite[Theorem 7.7]{BSWZ2012}.
\subsection*{Acknowledgements}I thank David Broadhurst and Wadim Zudilin for their critical reading of the initial draft for this manuscript.

%\bibliography{p5deriv}

\begin{thebibliography}{10}

\bibitem{Bailey2008}
David~H. Bailey, Jonathan~M. Borwein, David Broadhurst, and M.~L. Glasser.
\newblock Elliptic integral evaluations of {B}essel moments and applications.
\newblock {\em J. Phys. A}, 41(20):205203 (46pp), 2008.
\newblock \url{arXiv:0801.0891v2} [hep-th].

\bibitem{BlochKerrVanhove2015}
Spencer Bloch, Matt Kerr, and Pierre Vanhove.
\newblock A {F}eynman integral via higher normal functions.
\newblock {\em Compos. Math.}, 151(12):2329--2375, 2015.
\newblock \url{arXiv:1406.2664v3} [hep-th].

\bibitem{BNSW2011}
Jonathan~M. Borwein, Dirk Nuyens, Armin Straub, and James Wan.
\newblock Some arithmetic properties of random walk integrals.
\newblock {\em Ramanujan J.}, 26:109--132, 2011.

\bibitem{BSV2016}
Jonathan~M. Borwein, Armin Straub, and Christophe Vignat.
\newblock Densities of short uniform random walks in higher dimensions.
\newblock {\em J. Math. Anal. Appl.}, 437(1):668--707, 2016.
\newblock \url{arXiv:1508.04729v1} [math.CA].

\bibitem{BSW2013}
Jonathan~M. Borwein, Armin Straub, and James Wan.
\newblock Three-step and four-step random walk integrals.
\newblock {\em Exp. Math.}, 22(1):1--14, 2013.

\bibitem{BSWZ2012}
Jonathan~M. Borwein, Armin Straub, James Wan, and Wadim Zudilin.
\newblock Densities of short uniform random walks.
\newblock {\em Canad. J. Math.}, 64(5):961--990, 2012.
\newblock (With an appendix by Don Zagier) \url{arXiv:1103.2995v2} [math.CA].

\bibitem{Broadhurst2013MZV}
David Broadhurst.
\newblock Multiple zeta values and modular forms in quantum field theory.
\newblock In C.~Schneider and J.~Bl\"umlein, editors, {\em Computer Algebra in
  Quantum Field Theory}, Texts \& Monographs in Symbolic Computation, pages
  33--73. Springer-Verlag, Vienna, Austria, 2013.
\newblock
  \url{https://link.springer.com/chapter/10.1007%2F978-3-7091-1616-6_2}.

\bibitem{Broadhurst2016}
David Broadhurst.
\newblock Feynman integrals, ${L}$-series and {K}loosterman moments.
\newblock {\em Commun. Number Theory Phys.}, 10(3):527--569, 2016.
\newblock \url{arXiv:1604.03057v1} [physics.gen-ph].

\bibitem{Broadhurst2017Paris}
David Broadhurst.
\newblock ${L}$-series from {F}eynman diagrams with up to 22 loops.
\newblock In {\em Workshop on Multi-loop Calculations: Methods and
  Applications}, Paris, France, June 7, 2017. S\'eminaires Internationaux de
  Recherche de Sorbonne Universit\'es.
\newblock
  \url{https://multi-loop-2017.sciencesconf.org/data/program/Broadhurst.pdf}.


\bibitem{Broadhurst2017CIRM}
David Broadhurst.
\newblock Combinatorics of {F}eynman integrals.
\newblock In {\em Combinatoire Alg\'ebrique, R\'esurgence, Moules et
  Applications}, Marseille-Luminy, France, June 28, 2017. Centre International
  de Rencontres Math\'ematiques.
\newblock
  \url{http://library.cirm-math.fr/Record.htm?idlist=29&record=192828141249100%
00969}.

\bibitem{Broadhurst2017Higgs}
David Broadhurst.
\newblock Feynman integrals, beyond polylogs, up to 22 loops.
\newblock In {\em Amplitudes 2017}, Edinburgh, Scotland, UK, July 12, 2017.
  Higgs Centre for Theoretical Physics.
\newblock
  \url{https://indico.ph.ed.ac.uk/event/26/contribution/21/material/slides/0.p%
df}.

\bibitem{Broadhurst2017ESIa}
David Broadhurst.
\newblock Combinatorics of {F}eynman integrals.
\newblock In {\em Programme on ``Algorithmic and Enumerative Combinatorics''},
  Vienna, Austria, Oct. 17, 2017. Erwin Schr\"odinger International Institute
  for Mathematics and Physics.
\newblock \url{http://www.mat.univie.ac.at/~kratt/esi4/broadhurst.pdf}.


\bibitem{Broadhurst2017DESY}
David Broadhurst.
\newblock Feynman integrals, ${L}$-series and {K}loosterman moments.
\newblock In {\em Elliptic Integrals, Elliptic Functions and Modular Forms in
  Quantum Field Theory}, Zeuthen, Germany, Oct 23, 2017. KMPB Conference at
  DESY.
\newblock
  \url{https://indico.desy.de/getFile.py/access?contribId=3&resId=0&materialId%
=slides&confId=18291}.


\bibitem{Fettis1963}
Henry~E. Fettis.
\newblock On a conjecture of {K}arl {P}earson.
\newblock In {\em Rider Anniversary Volume}, pages 39--54, Belvoir, VA, 1963.
  Defense Technical Information Center.
\newblock
  \url{http://oai.dtic.mil/oai/oai?verb=getRecord&metadataPrefix=html&identifi%
er=ADA332379}.

\bibitem{FlajoletSedgewick2009}
Philippe Flajolet and Robert Sedgewick.
\newblock {\em Analytic combinatorics}.
\newblock Cambridge University Press, Cambridge, 2009.

\bibitem{FreedmanDiaconis1981}
David Freedman and Persi Diaconis.
\newblock On the histogram as a density estimator: {$L_{2}$}\ theory.
\newblock {\em Z. Wahrsch. Verw. Gebiete}, 57(4):453--476, 1981.

\bibitem{Kluyver1906}
J.~C. Kluyver.
\newblock A local probability problem.
\newblock {\em Nederl. Acad. Wetensch. Proc.}, 8:341--350, 1905.
\newblock \url{http://www.dwc.knaw.nl/DL/publications/PU00013859.pdf}.

\bibitem{Laporta2008}
S.~Laporta.
\newblock Analytical expressions of three- and four-loop sunrise {F}eynman
  integrals and four-dimensional lattice integrals.
\newblock {\em Internat. J. Modern Phys. A}, 23(31):5007--5020, 2008.
\newblock \url{arXiv:0803.1007v4} [hep-ph].

\bibitem{Laporta:2017okg}
Stefano Laporta.
\newblock High-precision calculation of the 4-loop contribution to the electron
  $g-2$ in {QED}.
\newblock {\em Physics Letters B}, 772(Supplement C):232--238, 2017.
\newblock \texttt{arXiv:1704.06996} [hep-th].


\bibitem{Pearson1905a}
Karl Pearson.
\newblock The problem of the random walk.
\newblock {\em Nature}, 72:294, 1905.

\bibitem{Pearson1905b}
Karl Pearson.
\newblock The problem of the random walk.
\newblock {\em Nature}, 72:342, 1905.

\bibitem{Pearson}
Karl Pearson.
\newblock A mathematical theory of random migration.
\newblock In {\em Drapers Company Research Memoirs}, volume III of {\em
  Biometric Series}. Cambridge University Press, Cambridge, UK, 1906.

\bibitem{Rayleigh1905}
Lord Rayleigh.
\newblock The problem of the random walk.
\newblock {\em Nature}, 72:318, 1905.

\bibitem{RogersWanZucker2015}
M.~Rogers, J.~G. Wan, and I.~J. Zucker.
\newblock Moments of elliptic integrals and critical ${L}$-values.
\newblock {\em Ramanujan J.}, 37(1):113--130, 2015.
\newblock \url{arXiv:1303.2259v2} [math.NT].

\bibitem{Samart2016}
Detchat Samart.
\newblock Feynman integrals and critical modular {$L$}-values.
\newblock {\em Commun. Number Theory Phys.}, 10(1):133--156, 2016.
\newblock \url{arXiv:1511.07947v2} [math.NT].

\bibitem{Zhou2017WEF}
Yajun Zhou.
\newblock Wick rotations, {E}ichler integrals, and multi-loop {F}eynman
  diagrams.
\newblock {\em Commun. Number Theory Phys.}, 12(1):127--192, 2018.
\newblock \url{arXiv:1706.08308} [math.NT].

\bibitem{Zhou2017BMdet}
Yajun Zhou.
\newblock Wro\'nskian factorizations and {B}roadhurst--{M}ellit determinant
  formulae.
\newblock {\em Commun. Number Theory Phys.}, 12(2):355--407, 2018.
\newblock  \url{arXiv:1711.01829} [math.CA].

\bibitem{Zhou2018ExpoDESY}
Yajun Zhou.
\newblock Some algebraic and arithmetic properties of {F}eynman diagrams.
\newblock In Johannes Bl\"umlein, Carsten Schneider, and Peter Paule, editors,
  {\em Elliptic Integrals, Elliptic Functions and Modular Forms in Quantum
  Field Theory}, Texts \& Monographs in Symbolic Computation, chapter~19, pages
  485--509. Springer, Cham, Switzerland, 2019.
\newblock \url{arXiv:1801.5555} [math.NT].

\bibitem{Zhou2018LaportaSunrise}
Yajun Zhou.
\newblock On {L}aporta's 4-loop sunrise formulae.
\newblock {\em Ramanujan J.}, 2018.
\newblock (to appear) \url{doi:10.1007/s11139-018-0090-z} \url{arXiv:1801.02182} [math.CA].

\end{thebibliography}
%\bibliographystyle{plain}

\end{document}